\documentclass[12pt,a4paper,psamsfonts]{amsart}
\usepackage{array}
\usepackage[english]{babel}
\usepackage[T1]{fontenc}
\usepackage{fancyhdr}
\usepackage{appendix}
\usepackage{amssymb,amscd,amsxtra,calc}
\usepackage{booklet}
\usepackage{mathrsfs}
\usepackage{cmmib57}
\usepackage{multirow}
\usepackage[all]{xy}
\usepackage{longtable}
\usepackage[colorlinks=true,linkcolor=blue,anchorcolor=blue,citecolor=blue,pagebackref,linktocpage]{hyperref}
\usepackage{cleveref}
\usepackage{tikz}
\usepackage{cite}
\theoremstyle{plain}
    \newtheorem{thm}{Theorem}[section]

    \newtheorem{example}[thm]{Example}
    \newtheorem{lemma}[thm]{Lemma}
    \newtheorem{proposition}[thm]{Proposition}
    
    \newtheorem{theorem}[thm]{Theorem}

\theoremstyle{definition}
    \newtheorem{definition}[thm]{Definition}

    \newtheorem*{notation*}{Notation and Terminology}
      
    \newtheorem{remark}[thm]{Remark}
    
\theoremstyle{remark}

    \newtheorem{setup}[thm]{}

\newcommand{\bP}{\mathbb{P}}
\newcommand{\bQ}{\mathbb{Q}}
\newcommand{\bR}{\mathbb{R}}

\newcommand{\bZ}{\mathbb{Z}}

\newcommand{\Amp}{\operatorname{Amp}}

\newcommand{\Aut}{\operatorname{Aut}}

\newcommand{\Contr}{\operatorname{Contr}}

\newcommand{\Exc}{\operatorname{Exc}}

\newcommand{\id}{\operatorname{id}}

\newcommand{\NE}{\overline{\operatorname{NE}}}
\newcommand{\Nef}{\operatorname{Nef}}

\newcommand{\NS}{\operatorname{NS}}

\newcommand{\Proj}{\operatorname{Proj}}

\newcommand{\N}{\operatorname{N}}

\newcommand{\Pic}{\operatorname{Pic}}

\newcommand{\mstriangle}[1]{
\begin{tikzpicture}[x=0.3cm,y=0.3cm]
\draw (-0.4,-0.433) -- (1.4,-0.433);
\draw (-0.2,-0.7794) -- (0.7,0.7794);
\draw (1.2,-0.7794) -- (0.3,0.7794);
\end{tikzpicture}
}
\newcommand{\mssharp}[1]{
\begin{tikzpicture}[x=0.3cm,y=0.3cm]
\draw (-0.8,-0.5) -- (0.8,-0.5);
\draw (-0.8,0.5) -- (0.8,0.5);
\draw (-0.5,-0.8) -- (-0.5,0.8);
\draw (0.5,-0.8) -- (0.5,0.8);
\end{tikzpicture}
}

\makeatletter

\newcommand{\Rmnum}[1]{\expandafter\@slowromancap\romannumeral #1@}
\makeatother

\begin{document}
\title[Free abelian group actions on projective fibered varieties]{Free abelian group actions on projective fibered varieties}
\author{Sichen Li}
\address{
School of Mathematics, East China University of Science and Technology, Shanghai 200237, P. R. China}
\email{\href{mailto:sichenli@ecust.edu.cn}{sichenli@ecust.edu.cn}}
\begin{abstract}
Let $X$ be an $n$-dimensional normal projective variety of Kodaira dimension $\kappa$ and $G$ an abelian subgroup of automorphisms such that all elements of $G\backslash \{\id\}$ are of positive entropy.
Dinh and Sibony showed that $G$ is actually free abelian of rank $\le n-\kappa-1$.
In this paper, we show that after replacing $G$ by a finite-index subgroup, there exists  a G-Equivariant Minimal Model Program for the pair $(X, G)$ when $\mathrm{rank}(G)=n-\kappa-1$ and $\kappa>0$, and the very general fiber of the final model is a  klt Calabi-Yau  surface or a $Q$-abelian variety provided that the termination of flips holds in this setting.
\end{abstract}

\subjclass[2010]{
08A35, 
14E30, 
37P55. 
}

\keywords{automorphism, dynamical degree, G-Equivariant Minimal Model Program, dynamical rank, Iitaka fibration}
\maketitle
\section{Introduction} 
Let $f: X\to  X$ be a surjective endomorphism of an $n$-dimensional normal projective variety $X$. 
Then the {\it first dynamical degree} of $f$ (cf. \cite{Dang20, DS05, Truong20}) is defined as 
$$
 d_1(f):=\lim_{s\to\infty}((f^s)^*H\cdot H^{n-1})^{1/s},
$$
where $H$ is an ample Cartier divisor on $X$.
Dinh and Sibony's result states that this limit exists and is independent of the choice of the ample divisor $H$.
We say $f$ is of positive entropy if $d_1(f)>1$, otherwise $f$ is of zero entropy.
We say $f$ is {\it imprimitive} (cf. \cite{Zhang09-JDG}) if there exists a dominant rational map $\pi: X\dasharrow Y$ and a dominant rational self-map $f|_Y: Y\dasharrow Y$ such that $\pi\circ f=f|_Y\circ\pi$.
We say such $\pi$ is $f$-equivariant.
We say $f$ is {\it primitive} if it is not imprimitive.

Let $X$ be an $n$-dimensional normal projective variety with the Kodaira dimension $\kappa(X)$ and $\Aut(X)$ the automorphism group of  $X$.
It is well-known that every abelian subgroup $G\subseteq \Aut(X)$ of positive entropy is free of dynamical rank $r(G)\le \dim X-\kappa(X)-1$ (cf. \cite{CWZ14, DS04, Zhang09}, see also Theorem \ref{Zhang09-thm} and Definition \ref{rank-defn}).
Since then, algebraic dynamics on systems with maximal dynamical rank have been intensively studied (cf. \cite{DHZ15, Dinh12, Hu20, HTZ18, Zhang13, Zhang14, Zhang16}), and the sub-maximal dynamical rank case was first studied by Hu and the author \cite{HL21}.
Thus, it is meaningful for us to consider the following hypothesis.
\begin{definition}
\label{hyp-defn}
Let $X$ be an $n$-dimensional normal projective variety with Kodaira dimension $\kappa$, and let $G\le\operatorname{Aut}(X)$. Suppose $X$ has at worst $\mathbb{Q}$-factorial klt singularities. We say $(X,G)$ satisfies $\operatorname{Hyp}(n,r)$ if $G\cong\mathbb{Z}^r$ with $1\le r\le n-\kappa-1$,   every non-trivial element of $G$ has positive entropy.
\end{definition}
A basic tool for characterizing $(X, G)$ is to run the G-Equivariant Minimal Model Program (G-EMMP) on a mildly singular $X$ as illustrated by the following commutative diagram:
\[
\xymatrix{
X=X_1 \ar@{-->}[r]^{\quad\pi_1} \ar@{-->}[d]^{f_1:=f^s}  &   X_2 \ar@{-->}[d]^{f_2}  \ar@{-->}[r]^{\pi_2} & \cdots \ar@{-->}[r]^{\pi_{n-1}} & X_n \ar@{-->}[d]^{f_n} \\
  X=X_1 \ar@{-->}[r]^{\quad\pi_1}  & X_2 \ar@{-->}[r]^{\pi_2} & \cdots \ar@{-->}[r]^{\pi_{n-1}} & X_n\\
}
\]
for every $f\ne\id\in G$ and some positive integer $s$ such that $f_i\in G|_{X_i}$ for each $i$.
Here, $\pi_i$ is either a divisorial contraction, a flip, or a Fano contraction of a $K_{X_i}$-negative extremal ray.
After iterating $f_i$, a Fano contraction is always equivariant, while a divisorial contraction (resp. flip) is equivariant if one can show the exceptional locus (resp. flipping locus) is $f_i$-periodic by \cite[Lemma 6.2]{MZ18}.

In this paper, we will clarify $(X,G)$ which satisfies $\mathrm{Hyp}(n,n-\kappa-1)$.
When $n=2$, the classification of $(X, G)$ is a theorem of Cantat \cite{Cantat99}.
When $\kappa\le 0$,  Zhang's maximal dynamical rank theorem \cite{Zhang16} proved that if  $\dim X\ge3$ and $X$ is not rationally connected, then $X$ is $G$-equivariantly birational to a $Q$-abelian variety (cf. \cite[Definition 2.13]{NZ10}).
When $\kappa=1$, Hu and the author showed in \cite[Lemma 3.1]{HL21} that the general fiber of the Iitaka fibration is $G$-equivariantly birational to a K3 surface, an Enriques surface, or a $Q$-abelian variety.
In general, when $\kappa>0$ and assume that the termination of flips holds, we may run a G-EMMP for $(X, G)$, and show that the very general fiber of the Iitaka fibration of the final model is a klt Calabi-Yau surface or a $Q$-abelian variety, which is our main result in this paper as follows.
\begin{theorem}
\label{Relative-MDR}
Suppose $(X,G)$ satisfies $\mathrm{Hyp}(n, n-\kappa(X)-1),\dim X\ge3, \kappa(X) >0$ and the termination of flips holds.
Then after replacing $G$ by a finite-index subgroup, there exists a G-EMMP such that the final model $(X', G')$ is relatively minimal over the base space of the Iitaka fibration $\Phi:=\varphi|_{mK_{X'}}$, and the very general fiber of $\Phi$ is a klt Calabi-Yau surface or a $Q$-abelian variety.
\end{theorem}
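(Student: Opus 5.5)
We would reduce the situation to the Iitaka fibration and then run a relative MMP on it. Since $\kappa(X)>0$, the Iitaka fibration $\phi: X\dasharrow Y$ with $\dim Y=\kappa$ is $\Aut(X)$-equivariant. The induced action of $G$ on $Y$ factors through a finite group, because automorphisms act on the pluricanonical system via a finite group (Nakamura--Ueno, Deligne). After replacing $G$ by a finite-index subgroup, we may therefore assume $G$ acts trivially on $Y$, so that $G$ acts fiberwise. Restricting to a very general fiber $F$ gives $G|_F\le\Aut(F)$ with $\dim F=n-\kappa$ and $\kappa(F)=0$.

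The first step is a dynamical rank count. By the relative dynamical degree product formula of Dinh--Nguyen, for $g\in G$ acting trivially on the base we have $d_1(g)=d_1(g|_F)$. Hence every nontrivial element of $G|_F$ still has positive entropy, and the restriction map $G\to G|_F$ is injective. Thus $G|_F\cong\bZ^{n-\kappa-1}$ has maximal dynamical rank $\dim F-\kappa(F)-1$ on $F$.

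Next comes the $G$-EMMP. Replacing $X$ by a $G$-equivariant resolution followed by a $\bQ$-factorial klt model (or working directly, since $X$ is already $\bQ$-factorial klt), we run the $K_X$-MMP. Since $K_X$ is not pseudo-effective-negative overall ($\kappa\ge0$), no Fano contractions occur, so every step is a divisorial contraction or a flip. The key point is equivariance, which we would obtain from \cite[Lemma 6.2]{MZ18}. Because $G$ is abelian with many positive-entropy elements, only finitely many $K$-negative extremal rays are relevant: by the cone theorem these span a rational polyhedral part away from $K^{\perp}$, and $G$ permutes them. Passing to a finite-index subgroup, we may therefore assume $G$ fixes each ray, and the contraction is $G$-equivariant. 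Termination of flips, which is assumed, ensures the program ends at a model $X'$ with $K_{X'}$ nef. We would rather run the MMP relative over $Y$ (or a $G$-equivariant model of the base), which yields $X'$ relatively minimal over the base with $K_{X'}\equiv_{Y}0$ on fibers. Abundance for the relative $\kappa=0$ fibers then gives $K_{F'}\sim_{\bQ}0$, and $\Phi=\varphi_{|mK_{X'}|}$ becomes a morphism.

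Finally, we classify the very general fiber $F'$, which is a klt variety with $K_{F'}\sim_\bQ 0$ carrying the maximal-rank action $G|_{F'}\cong\bZ^{\dim F'-1}$. If $\dim F'=2$, then $F'$ is by definition a klt Calabi-Yau surface. If $\dim F'\ge3$, we would apply Zhang's theorem \cite{Zhang16}: $F'$ is not rationally connected, because $K_{F'}\equiv0$ and klt. We would then need to show that $F'$ itself, not just a birational model, is a $Q$-abelian variety. This follows since $G|_{F'}$-equivariant birational maps between minimal models with $K\equiv0$ are isomorphisms in codimension one, and $Q$-abelian varieties have no flops, so the nef cones coincide.

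We expect the main obstacle to be the equivariance of each divisorial contraction and flip, that is, showing that the relevant extremal rays, or their exceptional and flipping loci, are $G$-periodic after passing to a finite-index subgroup. To handle this, we would first try to argue that the $K$-negative extremal rays meeting a $G$-invariant region form a finite set permuted by $G$, using the finiteness of $K_X+\epsilon H$-negative rays together with the fact that $G$ fixes $K_X$.
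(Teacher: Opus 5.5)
Your reduction to the very general fiber (Deligne--Nakamura--Ueno, then the Dinh--Nguyen product formula) agrees with Theorem~\ref{thm-n-kappa=2}. After that there is a genuine gap. You never use the nef common eigenvectors $L_1,\dots,L_{n-\kappa}$ of Proposition~\ref{DS} or the relatively big divisor $A=\sum L_i$, and you run the $K_X$-MMP instead of the $(K_X+A)$-MMP.

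This breaks your equivariance step. For the plain $K_X$-MMP, the cone theorem gives only local finiteness of $K_X$-negative extremal rays away from $K_X^{\perp}$, i.e.\ finiteness of the $(K_X+\epsilon H)$-negative rays for an ample $H$. Since $G$ contains positive-entropy elements, no (relatively) ample class is $G$-invariant, so this finite set is not $G$-stable. The $G$-stable set of all $K_X$-negative rays is in general only countable and may accumulate towards $K_X^{\perp}$. Hence you cannot pass to a finite-index subgroup fixing the ray to be contracted. Nor have you shown that the exceptional or flipping locus is $G$-periodic, which is what \cite[Lemma 6.2]{MZ18} requires.

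In the paper, $A$ is made into a klt boundary (Lemma~\ref{lemma-A-big}). Because $A$ is relatively big, there are only finitely many $(K_X+A)$-negative extremal rays over $S$. After enlarging $A$, these rays satisfy $A\cdot\ell=0$, hence $L_i\cdot\ell=0$ for every $i$. Since $g^*L_i=\chi_i(g)L_i$ and $g^*K_X=K_X$, this finite set is $G$-stable, so a finite-index subgroup fixes it. The $L_i$ then descend to nef eigenvectors, and the argument iterates.

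The same omission affects your claim $K_{F'}\equiv 0$. That $K_{F'}$ is nef with $\kappa(F')=0$ does not give numerical triviality without abundance, which is open in general. The paper obtains $K_F\equiv0$ from $K_F\cdot(A|_F)^{\dim F-1}=0$ together with the Hodge-index Lemma~\ref{Hodge-lem}, applied to both parts of the divisorial Zariski decomposition.

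Your last step is false. A $K$-MMP never contracts crepant divisors, and birational klt (even smooth) varieties with $K\equiv 0$ need not be isomorphic in codimension one. Here is a counterexample.
\begin{itemize}
\item Let $E_\omega$ be the elliptic curve with complex multiplication by $\bZ[\omega]$, where $\omega=e^{2\pi i/3}$, and let $\widetilde T$ be the crepant resolution of $T=E_\omega^3/\langle\omega\rangle$ obtained by blowing up its $27$ singular points.
\item Let $G\cong\bZ^2$ be the free part of the unit group of a totally real cubic field $k$, acting on $E_\omega^3=E_\omega\otimes_{\bZ}\mathcal{O}_k$. A unit $u\neq\pm1$ has a real conjugate of absolute value $>1$, so every nontrivial element has $d_1>1$. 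The action commutes with $\omega$, permutes the singular points of $T$, and lifts to $\widetilde T$.
\item Put $X=C\times\widetilde T$ with $C$ a curve of genus at least $2$.
\end{itemize}
Then $(X,G)$ satisfies $\mathrm{Hyp}(4,2)$ with $\kappa=1$, and $K_X$ is already nef, so your program stops at $X$. Its fiber $\widetilde T$ contains rational curves (lines in the exceptional planes). Hence it is not $Q$-abelian, since a smooth $Q$-abelian variety is an \'etale quotient of an abelian variety and contains none. Zhang's birational statement combined with ``no flops'' cannot repair this.

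The paper's program instead ends at the relative ample model $Y$ of $K_{X_s}+A_s$, reached by the crepant morphism $\tau_s$ of Theorem~\ref{thm-MMP}. This morphism contracts exactly such $A$-trivial loci; here it is $C\times\widetilde T\to C\times T$. On $Y$ the restriction $A_Y|_F\equiv K_F+A_Y|_F$ is ample. On the other hand, $A|_Z\equiv 0$ for every $G$-periodic proper subvariety $Z\subset F$ by \cite[Proposition 3.11(7)]{Zhang16}. So every such $Z$ is a point, and \cite[Lemmas 3.9 and 3.10]{Zhang16} then show that $F$ is $Q$-abelian.
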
 
\begin{example}
Let $X:=Y\times F$ be the product of  $Y$ and $F$, where $Y$ is a minimal projective variety of general type with $\dim Y=\kappa>0$, and $F$ is a K3 surface, an Enriques surface, or an abelian variety, and $(F,G|_F)$ satisfies $\mathrm{Hyp}(n-\kappa,n-\kappa-1)$ (cf. \cite{McMullen11}, \cite{OY20}, \cite[Example 1.4]{Dinh12}).
Let $G:=\{\id_Y\}\times G|_F$.
Therefore, $(X,G)$ satisfies $\mathrm{Hyp}(n,n-\kappa-1)$.
\end{example}
\begin{remark}
Let $(X,G)$ satisfy $\mathrm{Hyp}(n,n-\kappa-1)$ and $\kappa>0$.
By Theorem \ref{Relative-MDR} and Proposition \ref{Bir}, one may assume that $X$ is relatively minimal.
However, it is unknown whether there exists an example of the Iitaka fibration of $(X, G)$ that is not isotrivial.
For instance, is it isotrivial for the Iitaka fibration of every projective threefold $X$  of $\kappa=1$  admitting an automorphism $f$ with $d_1(f)>1$?
\end{remark}
 {\bf Acknowledgment.}
I thank Rong Du and Guolei Zhong for helpful conversations,
Haidong Liu for answering questions, and the referees for careful reading of the manuscript
and many valuable suggestions.
The research is supported by  Shanghai Sailing Program (No. 23YF1409300).
\section{Preliminaries}
\label{Prelim}
\subsection{Notation and conventions}
We refer to \cite{Fulton98, Kawamata24, KM98} for the standard definitions, notation, and terminology in birational geometry.

Let $X$ be a normal projective variety.
$\N_\bZ^k(X)$ is the group of codimension $k$ algebraic cycles on $X$ modulo numerical equivalence.
We will use the vector spaces $\N_\bR^k(X):=\N_\bZ^k(X)\otimes_\bZ\bR$.
In particular, $\N_\bR^1(X)=\NS(X)\otimes_\bZ\bR$, where $\NS(X):=\Pic(X)/\Pic^0(X)$ is the usual N\'eron-Severi group of $X$.

Let $\pi\colon X\to S$ be a projective morphism of algebraic varieties defined over a field $k$.
Two $\mathbb{R}$-divisors $D,D'$ on $X$ are said to be \emph{relatively $\mathbb{R}$-linearly equivalent}, denoted $D\sim_{\mathbb{R},S} D'$,
if the difference $D-D'$ satisfies
\[
D - D' \sim_{\mathbb{R}} \pi^*(H_S)
\]
for some $\mathbb{R}$-Cartier divisor $H_S$ on the base variety $S$.
A closed subvariety $Z\subset X$ is called a \emph{relative subvariety} over $S$ whenever $\pi(Z)$ is a closed point of $S$.
In particular, if $\dim Z = 1$, then $Z$ is referred to as a \emph{relative curve} over $S$.

Two Cartier divisors $D$ and $D'$ on $X$ are said to be \emph{relatively numerically equivalent}, denoted $D \equiv_S D'$, if $(D \cdot C) = (D' \cdot C)$ for every relative curve $C$ over $S$. 
This definition extends naturally to $\mathbb{R}$-Cartier divisors, with the same notation $D \equiv_S D'$ adopted for real relative numerical equivalence classes.
The abelian group of isomorphism classes of invertible sheaves on $X$ is written $\operatorname{Pic}(X)$. 
Its subgroup consisting of invertible sheaves relatively numerically equivalent to $\mathcal{O}_X$ is denoted $\operatorname{Pic}^\tau(X/S)$. The quotient group $\operatorname{Pic}(X)/\operatorname{Pic}^\tau(X/S)$ is a finitely generated abelian group (see \cite[p. 323]{Kleiman66}), called the \emph{relative N\'eron-Severi group} $\operatorname{NS}(X/S)$.
For any $\mathbb{R}$-Cartier divisor $D$, we write $[D]$ for its numerical equivalence class. The set of all such real numerical classes coincides with $\operatorname{NS}(X/S) \otimes \mathbb{R}$, which is  a real vector space of dimension $\rho(X/S)$, and is  denoted by $N^1(X/S)$.
The closure of the cone of all effective divisors in $N^1(X/S)$  is denoted by $\mathrm{Eff}(X/S)$, which is called the {\it relative pseudoeffective cone}.

The set of interior points of the closed convex cone $\mathrm{Eff}(X/S)$ is called the {\it relative big cone} and is denoted by $\mathrm{Big}(X/S)$.
An $\bR$-Cartier divisor $D$ is {\it relatively big} if its numerical equivalence class $[D]$ is contained in $\mathrm{Big}(X/S)$.

An $\bR$-Cartier divisor $D$ is said to be {\it relatively nef} if $(D\cdot C)\ge0$ for any relative curve $C$.
The set of numerical equivalence classes of all nef $\bR$-Cartier divisors is a closed convex cone in $N^1(X/S)$, which is denoted by $\Nef(X/S)$ and called the {\it relatively nef cone}.

The set of interior points of the relative nef cone is called the {\it relative ample cone} and is denoted by $\Amp(X/S)$.
An $\bR$-Cartier divisor $D$ is said to be {\it relatively ample} if its numerical equivalence class is contained in $\Amp(X/S)$.

Fix an integer $t$, a finite formal linear sum of $t$-dimensional relative subvarieties $Z=\sum a_jZ_j$ is called a {\it relative $t$-cycle}.
Relative 1-cycles $C,C'$ are said to be {\it numerically equivalent}, denoted by $C\equiv_SC'$, if $(D\cdot C)=(D\cdot C')$ for any Cartier divisor $D$.
The set $\N_1(X/S)$ of all numerical equivalence classes of relative 1-cycles with real coefficients is a finite-dimensional real vector space.
In $\N_1(X/S)$, the closure of the cone of relative curves is called the {\it closed cone of relative curves}, which is denoted by $\NE(X/S)$.
\begin{definition}
A normal projective surface \(X\) is called a \emph{klt Calabi-Yau surface} if \(X\) has klt singularities and \(K_X\sim_{\mathbb Q}0\).
\end{definition}
The following result, a generalization of the Hodge index theorem, extends \cite[Lemma 2.2]{NZ10} to the singular setting. 
\begin{lemma}
\label{Hodge-lem}
Let $X$ be a normal projective variety of dimension $n\ge2$.
Suppose that an $R$-Cartier divisor $D$ satisfies the following two conditions:
\begin{enumerate}
	\item[(i)]\(D\cdot G\cdot L_1\cdots L_{n-2}\geq 0\) for any effective \(\mathbb R\)-divisor $G$ and every collection of nef $\mathbb R$-Cartier divisors $L_1,\ldots,L_{n-2}$;
\item[(ii)] $D\cdot H_1\cdots H_{n-1}=0$ for some nef and big $\mathbb R$-Cartier divisors $H_1,\ldots,H_{n-1}$.
\end{enumerate}
Then \(D\equiv 0\).
\end{lemma}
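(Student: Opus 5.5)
The approach is to pass to a resolution and reduce to the smooth case, i.e.\ to \cite[Lemma 2.2]{NZ10}. Alternatively, one can argue directly via the Hodge index theorem on a surface cut out by general hyperplane sections.

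\textbf{Step 1: reduce to $D\cdot A_1\cdots A_{n-1}=0$ for ample classes.} Let $\pi\colon \widetilde X\to X$ be a resolution. Put $\widetilde D:=\pi^*D$ and $\widetilde H_i:=\pi^*H_i$; the $\widetilde H_i$ are nef and big, and by the projection formula $\widetilde D\cdot \widetilde H_1\cdots\widetilde H_{n-1}=0$. Condition (i) transfers to $\widetilde X$ for effective divisors $\widetilde G$ and pullbacks of nef classes, since $\widetilde D\cdot \widetilde G\cdot \pi^*L_1\cdots=D\cdot \pi_*\widetilde G\cdot L_1\cdots\ge0$. We only need $D\equiv0$ on $X$, which is equivalent to $\pi^*D\equiv 0$, so it is cleaner to stay on $X$ and use multilinearity.

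Write each nef and big $H_i$ as $H_i = A_i + E_i$ with $A_i$ ample and $E_i$ an effective $\mathbb R$-divisor (Kodaira's lemma, valid for $\mathbb R$-Cartier nef and big divisors on a normal projective variety). Then expand $0=D\cdot H_1\cdots H_{n-1}$ by replacing the $H_i$ one at a time:
\[
D\cdot H_1\cdots H_{n-1} = D\cdot A_1\cdot H_2\cdots H_{n-1}+D\cdot E_1\cdot H_2\cdots H_{n-1}.
\]
The second term is $\ge0$ by (i), with $G=E_1$ and $L_j=H_{j+1}$ nef. The first term is also $\ge0$: $A_1$ is ample, so some multiple of it is linearly equivalent to an effective divisor, and (i) applies. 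Hence both terms vanish. Iterating over all indices gives $D\cdot A_1\cdots A_{n-1}=0$. By perturbation we get $D\cdot G\cdot L_1\cdots L_{n-2}=0$ whenever $G$ lies in the interior of the nef cone and $G,L_j$ are close to the $A_i$.

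\textbf{Step 2: cut down to a surface.} Let $S$ be a general complete intersection surface cut out by members of $|mA_2|,\dots,|mA_{n-1}|$ (taking $m\gg0$ and $\mathbb Q$-approximation). By Bertini $S$ is normal. The restriction $D|_S$ satisfies $D|_S\cdot A_1|_S=0$ and $D|_S\cdot C\ge0$ for every curve $C\subset S$ that is a restriction of an effective divisor. Since $S$ is a general complete intersection, any curve class on $S$ can be compared with effective divisors on $X$ to the extent needed, namely through the small-perturbation trick of Step 1 applied to $A_1$ replaced by $A_1\pm\epsilon B$ for an arbitrary Cartier $B$.

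Concretely, for any Cartier $B$ and small $\epsilon$, $A_1+\epsilon B$ is ample, so (i) gives $D\cdot(A_1+\epsilon B)\cdot A_2\cdots A_{n-1}\ge0$. Combined with $D\cdot A_1\cdots A_{n-1}=0$, this yields $D\cdot B\cdot A_2\cdots A_{n-1}=0$ for all $B$. Similarly $D^2\cdot A_2\cdots A_{n-1}\ge0$: write $D+tA_1$ as ample for $t\gg0$ and use (i) with $G=D+tA_1$ (or a multiple that is effective) to get $D\cdot(D+tA_1)\cdot A_2\cdots A_{n-1}\ge0$, i.e.\ $D^2\cdot A_2\cdots A_{n-1}\ge 0$.

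\textbf{Step 3: Hodge index.} On the surface $S$ (or on its resolution, pulling back), $D|_S$ is orthogonal to the ample class $A_1|_S$ and satisfies $(D|_S)^2\ge0$. The Hodge index theorem then forces $D|_S\equiv0$ on $S$, and in particular $(D|_S)^2=0$. Consequently $D\cdot B\cdot A_2\cdots A_{n-1}=0$ for all Cartier $B$, for all ample $A_2,\dots,A_{n-1}$ near the originals, and by linearity (ample classes span $N^1$) for all classes. In particular $D\cdot C=0$ for all curves $C$, because curve classes are limits of combinations of complete intersections of ample divisors in the dual sense. More precisely, $N_1(X)$ is spanned by classes $A_1\cdots A_{n-1}$, so $D\equiv0$.

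The main obstacle is the last implication, that $D$ pairing trivially with all $(n-1)$-fold products of Cartier divisors gives $D\equiv0$, i.e.\ that such products span $N_1(X)$ on a singular normal $X$. I would handle it by pulling back to a resolution, where this is standard, or by citing the generalization of Hodge index on normal surfaces used by Nakayama--Zhang.
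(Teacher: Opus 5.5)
\textbf{The gap is your final step.} Steps 1--2 follow the paper's argument. Kodaira's lemma together with (i) turns $D\cdot H_1\cdots H_{n-1}=0$ into $D\cdot A^{n-1}=0$, and (i) applied to an effective representative of $D+tA$ gives $D^2\cdot A^{n-2}\ge 0$. Two small remarks on this part:
\begin{itemize}
\item Take every $A_i$ to be a positive multiple of one fixed ample Cartier divisor $A$, as the paper does with $a_iH_i\equiv A+E_i$. Then neither the $\mathbb{Q}$-approximation nor the perturbation near the $A_i$ is needed.
\item Your one-slot argument with $B=D$ already gives $D^2\cdot A^{n-2}=0$. So the Hodge inequality \cite[Claim 3.3]{Zhang16} that the paper invokes is in fact superfluous.
\end{itemize}

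Hodge index on a \emph{general} complete intersection surface $S$ only yields $D|_S\equiv 0$, i.e.\ that $D$ is orthogonal to all products $B_1\cdots B_{n-1}$ of Cartier classes. A general $S$ contains no prescribed curve $C$, so this says nothing about $D\cdot C$ without a Lefschetz-type input. Your claim that such products span $N_1(X)$ on a singular normal $X$ is therefore not a formality. It is exactly the input the paper imports from \cite[Lemma 3.2]{Zhang16}: if $D\cdot A^{n-1}=0=D^2\cdot A^{n-2}$ for an ample $A$, then $D\equiv 0$. That citation is what closes the paper's proof at this point.

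\textbf{The resolution route does not work as stated.} On $\widetilde X$ you only know that $\pi^*D$ is orthogonal to products of pulled-back classes $\pi^*B_i$, which are merely nef. The smooth statement (hard Lefschetz, Hodge--Riemann) needs orthogonality against products of \emph{ample} classes on $\widetilde X$. Indeed, the form $x\cdot y\cdot(\pi^*A)^{n-2}$ on $N^1(\widetilde X)$ is degenerate: exceptional divisors over centers of dimension $<n-2$ lie in its kernel. Moreover, $\pi_*(\widetilde B_1\cdots\widetilde B_{n-1})$ is in general not a product of Cartier classes on $X$, so spanning on $\widetilde X$ does not push forward. Hodge index on normal surfaces only reproduces $D|_S\equiv 0$, which you already have.

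\textbf{How to close it.} Either cite \cite[Lemma 3.2]{Zhang16} as the paper does, or run Hodge index on a surface \emph{through} each curve. For an irreducible curve $C\subset X$ and $m\gg0$, Bertini shows that $n-2$ general members of $|mA\otimes\mathcal{I}_C|$ cut out an irreducible, generically reduced surface $S_C\supset C$ with $[S_C]=(mA)^{n-2}$. On a resolution $\nu\colon\widetilde S_C\to S_C$ one has $\nu^*D\cdot\nu^*A=0$, $(\nu^*D)^2=0$ and $(\nu^*A)^2>0$. Hence $\nu^*D\equiv 0$, and therefore $D\cdot C=0$.
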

\begin{proof}
Fix an ample Cartier divisor \(A\) on \(X\).
Replacing $A$ by a sufficiently large multiple, we may assume $A$ is also effective.
Since \(H_1\) is big, there exist \(a_1>0\) and an effective \(\mathbb R\)-divisor \(E_1\) such that
$
a_1H_1\equiv A+E_1.
$
Thus, by (ii), we have
\[
0=D\cdot A\cdot H_2\cdots H_{n-1}
+D\cdot E_1\cdot H_2\cdots H_{n-1}.
\]
Both terms on the right are nonnegative by (i), we have
\[
D\cdot A\cdot H_2\cdots H_{n-1}=0.
\]
Repeating the argument for \(H_2,\ldots,H_{n-1}\), we obtain
$
D\cdot A^{n-1}=0.
$
\cite[Claim 3.3]{Zhang16} implies that $D^2\cdot A^{n-2}\le0$.
For \(m\gg0\), \(D+mA\) is big. 
Hence, there exists an effective \(\mathbb R\)-divisor \(\Delta\) such that $\Delta\equiv D+mA.$
By (i),
$
D\cdot\Delta\cdot A^{n-2}\ge0
$
implies that
\[
\begin{aligned}
D^2\cdot A^{n-2}
&=D\cdot\Delta\cdot A^{n-2}
-mD\cdot A^{n-1}\\
&\ge0.
\end{aligned}
\]
Thus, $D^2\cdot A^{n-2}=0$.
As a result,  $D\equiv0$ by \cite[Lemma 3.2]{Zhang16}.
\end{proof}
\subsection{Dynamical ranks}
A $G$-action on an algebraic variety $Y$ is a group homomorphism $G\to \Aut(Y)$.
We denote by $G|_Y$ the image of $G$ in $\Aut(Y)$.
The action of $G$ on $Y$ is {\it faithful} if $G\to\Aut(Y)$ is injective.
Let $G$ be a subgroup of the automorphism group $\Aut(X)$.
A rational map $\pi: X\dasharrow Y$ is called {$G$-equivariant} if for each $g_X\in G$, there is an automorphism $g_Y$ of $Y$ such that $\pi\circ g_X=g_Y\circ \pi$.
We hence denote by $G|_Y$ the image of $G$ in $\Aut(Y)$.
We say a normal projective variety $X$ is a $Q$-abelian variety if there is a quasi-\'etale morphism $\pi: A\to X$ such that $A$ is an abelian variety.

The following Tits alternative type theorem is due to Zhang \cite{Zhang09}.
\begin{theorem}
\label{Zhang09-thm}
Let $X$ be a normal projective variety of dimension $n\ge2$ and $G$ a subgroup of $\Aut(X)$.
Then one of the following two assertions holds.
\begin{enumerate}
\item $G$ contains a subgroup isomorphic to the non-abelian free group $\bZ*\bZ$.
\item There is a finite-index subgroup $G_1$ of $G$ such that the induced group $G_1|_{\NS_\bR(X)}$ is solvable and $Z$-connected.
Moreover, the zero entropy subset $N(G_1)$ of $G_1$ is a normal subgroup of $G_1$ and the quotient group $G_1/N(G_1)$ is  free abelian of rank $r\le n-1$. 
\end{enumerate}
\end{theorem}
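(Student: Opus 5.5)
The plan is to reduce to linear algebra on the lattice $\NS(X)/(\mathrm{torsion})\subset \NS_\bR(X)$, on which $G$ acts through $g\mapsto (g^{-1})^*$, and then to apply Tits' alternative for finitely generated linear groups. Since $G|_{\NS_\bR(X)}$ need not be finitely generated, I would use the version of Tits' theorem valid for arbitrary subgroups of $\mathrm{GL}_m(\bC)$ in characteristic zero. That version says a linear group either contains a non-abelian free subgroup or is virtually solvable.

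\emph{Case of a free subgroup.} If $G|_{\NS_\bR(X)}$ contains a copy of $\bZ*\bZ$, choose preimages in $G$ of its two free generators. The subgroup they generate surjects onto a free group of rank two. It is therefore itself free of rank two, since a relation among the lifts would map to a relation among the images. This gives assertion (1).

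\emph{Virtually solvable case: the eigenvector chain.} Otherwise there is a finite-index subgroup $G_1$ whose image is solvable. Replacing $G_1$ by the preimage of the identity component of the Zariski closure of its image, we may assume $G_1|_{\NS_\bR(X)}$ is solvable and $Z$-connected. Since $G_1$ preserves the nef cone $\Nef(X)$, a Birkhoff--Perron--Frobenius theorem for $Z$-connected solvable groups preserving a strictly convex cone (as in Keum--Oguiso--Zhang) should give a common eigenvector $L_1\in\Nef(X)$. Write $g^*L_1=\chi_1(g)L_1$.

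I would then iterate on the cones $\Nef(X)\cdot L_1\cdots L_{k-1}$ inside $\N^k_\bR(X)$, producing common nef eigenvectors $L_1,\dots,L_{n-1}$ with characters $\chi_1,\dots,\chi_{n-1}$. The chain is built so that $L_1\cdots L_k\neq0$ for each $k$. From these characters I define the homomorphism
\[
\psi\colon G_1\to(\bR^{n-1},+),\qquad g\mapsto(\log\chi_1(g),\dots,\log\chi_{n-1}(g)).
\]

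\emph{Kernel and image of $\psi$.} First, using the Hodge-index type inequalities (Lemma \ref{Hodge-lem} and the results of \cite{Zhang16} it relies on), I would show that $d_1(g)$ is attained by one of the $\chi_i(g)$ or $\chi_i(g^{-1})$. This would give $\ker\psi=N(G_1)$; in particular $N(G_1)$ is a normal subgroup.

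Second, I would show that $\psi(G_1)$ is discrete. The numbers $\chi_i(g)$ are eigenvalues of an integral matrix of bounded size acting on the lattice, hence algebraic integers of bounded degree. Kronecker-type finiteness then shows that only finitely many of them lie in any bounded region. A discrete subgroup of $\bR^{n-1}$ is free abelian of rank at most $n-1$, so $G_1/N(G_1)$ is free abelian of rank $r\le n-1$, which is assertion (2).

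\emph{Main obstacle.} I expect the hardest part to be building the chain $L_1,\dots,L_{n-1}$ and proving that $\ker\psi$ is exactly the set of zero-entropy elements. This is where the Perron--Frobenius argument must be run on the non-strict cones $\Nef(X)\cdot L_1\cdots L_{k-1}$, and where the singular Hodge index lemma is needed to compare $d_1(g)$ with the $\chi_i(g)$.
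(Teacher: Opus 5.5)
The paper gives no proof of this statement; it quotes it from Zhang \cite{Zhang09}. Your outline matches the architecture of Zhang's argument: Tits alternative on the N\'eron--Severi lattice, passage to a $Z$-connected finite-index subgroup, the Keum--Oguiso--Zhang cone theorem, and Dinh--Sibony characters. Your lifting of $\bZ*\bZ$ and your reduction to a solvable $Z$-connected $G_1$ are correct. The two steps that carry the content, however, have genuine gaps.

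\emph{The chain and the kernel.} Running the cone theorem on $\overline{\Nef(X)\cdot L_1\cdots L_{k-1}}\subset \N^k_\bR(X)$ produces a common eigenclass $M_k$. This class is only a limit of classes $M_{k-1}\cdot H_j$ with $H_j$ nef. It is not of the form $M_{k-1}\cdot L_k$ with $L_k$ a nef $G_1$-eigendivisor, so you do not actually obtain ``common nef eigenvectors $L_1,\dots,L_{n-1}$''.

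You can define $\chi_k$ as the ratio of the eigenvalues of $M_k$ and $M_{k-1}$, but then Lemma \ref{Hodge-lem} is not the right tool. It concerns divisors and needs nef and big $H_i$, while even genuine eigendivisors $L_1,\dots,L_{n-1}$ need not have big sum (in the example below, $(L_1+L_2)^3=0$). What is needed is a Dinh--Sibony type collinearity statement for such classes: if $M\cdot v\cdot w=0$ with $v,w$ nef, then $M\cdot v$ and $M\cdot w$ are linearly dependent. This has to be justified on singular $X$.

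The chain must also be continued to degree $n$, where $g^*$ acts trivially on $\N^n_\bR(X)=\bR$; this is what brings in the extra character $(\chi_1\cdots\chi_{n-1})^{-1}$. Your proposed mechanism for $\ker\psi=N(G_1)$ ignores this and is false for $n\ge 3$. Take a totally real cubic field $K$, an elliptic curve $E$, $X=E\otimes_\bZ\mathcal O_K\cong E^3$, and $G$ a rank-two group of units acting by $(z_i)\mapsto(\sigma_i(u)z_i)$. The only common nef eigenvectors are the classes of $\sqrt{-1}\,dz_i\wedge d\bar z_i$, with characters $\sigma_i(u)^2$. So, up to relabeling, $\chi_j(u)=\sigma_j(u)^2$ for $j=1,2$. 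For a unit with $|\sigma_1(u)|,|\sigma_2(u)|<1$ one gets $d_1(u)=\sigma_3(u)^2=(\chi_1(u)\chi_2(u))^{-1}$, which is none of $\chi_1(u)^{\pm1},\chi_2(u)^{\pm1}$.

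What one actually needs is an estimate of the form $\log d_1(g)\le C\,\|\psi(g)\|$, in which the top-degree character enters. It is obtained by comparing a Perron--Frobenius eigenvector $v$ of $g$ with the whole chain $M_1,\dots,M_n$. This estimate gives $\ker\psi\subseteq N(G_1)$. The reverse inclusion is the easy one: zero entropy forces all eigenvalues of $g^{*}$ on each $\N^k_\bR(X)$ to have modulus one.

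\emph{Discreteness.} The step ``algebraic integers of bounded degree, hence only finitely many in a bounded region'' is false. The numbers $(\sqrt2-1)^m$ are units of degree $2$ in $(0,1)$. In the example above, $\chi_1(u)=\sigma_1(u)^2$ takes infinitely many values in $[1/2,2]$ as $u$ ranges over $G$. Kronecker/Northcott finiteness needs a bound on all Galois conjugates, i.e.\ on the whole spectrum of $g^*$ on the lattice, and a bound on $\psi(g)$ alone does not provide this.

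The missing link is again the comparison estimate. Applied to $g$ and $g^{-1}$, it bounds the spectral radii of $g^*$ and $(g^{-1})^*$ on $\N^1_\bR(X)$, hence on every $\N^k_\bR(X)$, in terms of $\|\psi(g)\|$. Then only finitely many characteristic polynomials occur over any bounded set of values of $\psi$. Hence $\psi(g)$ takes only finitely many values there, and $\psi(G_1)$ is discrete.
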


In the second assertion of Theorem \ref{Zhang09-thm}, the rank of $G_1/N(G_1)$ is independent of the choice of $G_1$. Hence, it makes sense to think of this as an invariant of $G$.
Under Hypothesis $\operatorname{Hyp}(n,r)$ from Definition \ref{hyp-defn}, $G \cong \mathbb{Z}^r$ is abelian, so its restriction $G|_{N_{\mathbb{R}}^1(X)}$ is automatically abelian and thus solvable. With this prerequisite satisfied, we introduce the following definition.

\begin{definition}
\label{rank-defn}
Let $X$ be a normal projective variety of dimension $n$ and $G$ a Zariski-connected subgroup of $\Aut(X)$ such that $G|_{N^1_{\mathbb R}(X)}$ is solvable. Then the rank of the free abelian group $G/N(G)$ is called the dynamical rank of $G$, denoted by $r(G)$.
\end{definition}
The following assertion states that the dynamical rank is a birational invariant.
\begin{proposition}
\cite[Lemmas 4.2 and 4.4]{Hu20}
\label{dr-birational}
Let $\pi: X\dasharrow Y$ be a $G$-equivariant generically finite dominant rational map of normal projective varieties.
Then after replacing $G$ by a finite-index subgroup, $G|_{\N_\bR^1(X)}$ is solvable if and only if so is $G|_{\N_\bR^1(Y)}$.
Moreover, $r(G|_X)=r(G|_Y)$.
\end{proposition}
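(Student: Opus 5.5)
The statement to prove is Proposition \ref{dr-birational}, quoted from \cite{Hu20}. The plan is to reduce to the case where $\pi$ is a generically finite surjective morphism, and then compare the actions on $\N^1_\bR$ through pullback and pushforward.

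\textbf{Reduction to a morphism.} Let $\Gamma$ be the normalization of the graph of $\pi$, with projections $p\colon\Gamma\to X$ (birational) and $q\colon\Gamma\to Y$ (generically finite, surjective). Since $\pi$ is $G$-equivariant, $G$ acts biregularly on $\Gamma$ through the diagonal action $g\mapsto(g_X,g_Y)$, and $p,q$ are equivariant. It therefore suffices to treat a $G$-equivariant generically finite surjective morphism $f\colon X\to Y$, applied twice: once to $p$, once to $q$.

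\textbf{Comparing the linear actions.} The pullback $f^*\colon \N^1_\bR(Y)\to\N^1_\bR(X)$ is injective, because $f_*f^*=\deg f\cdot\id$ via the projection formula on intersection numbers with curves. Equivariance gives $g_X^*f^*=f^*g_Y^*$, so $f^*\N^1_\bR(Y)$ is a $G$-stable subspace. Dually, $f_*\colon\N^1_\bR(X)\to\N^1_\bR(Y)$, defined on Weil classes and then pushed to numerical classes, or taken as the dual of $f^*$ on $\N_1$, is surjective and equivariant.
\begin{itemize}
\item If $G|_{\N^1(X)}$ is solvable, then its restriction to the invariant subspace $f^*\N^1(Y)\cong\N^1(Y)$ is a quotient of a subgroup, hence solvable. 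This gives the direction from $X$ to $Y$.
\item For the converse, one cannot conclude directly. Instead I would invoke Theorem \ref{Zhang09-thm}: if $G|_{\N^1(X)}$ is not virtually solvable, then $G$ contains $\bZ*\bZ$. Pick $g,h$ generating a free group whose images stay free on $X$. Their images on $Y$ must also generate something non-virtually-solvable; otherwise the positive-entropy dynamics, read through first dynamical degrees, would collapse.
\end{itemize}

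\textbf{Dynamical degrees and equality of ranks.} The core step is $d_1(g_X)=d_1(g_Y)$ for each $g\in G$. This is the standard invariance of dynamical degrees under generically finite equivariant maps \cite{DS05, Dang20, Truong20}. In the morphism case it can be checked directly: $g_X^{*s}f^*H_Y$ grows like $g_Y^{*s}H_Y$, and $f^*H_Y$ is nef and big, so it computes $d_1$ just as an ample class does. Hence the zero-entropy subgroups satisfy $N(G|_X)=N(G|_Y)$ under the identification $G\to G|_X,\ G|_Y$, after passing to a common finite-index subgroup $G_1$ on which both linear actions are solvable and $Z$-connected. This gives $G_1/N(G_1)$ the same on both sides, so $r(G|_X)=r(G|_Y)$. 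Faithfulness is not an issue: elements acting trivially on $Y$ have $d_1=1$ on $X$ and hence lie in $N$.

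\textbf{Main obstacle.} The hardest step is the converse of solvability. Not every element acting trivially on $\N^1(Y)$ acts trivially on $\N^1(X)$: exceptional divisors of $p$ can be permuted. I would handle this as follows. After passing to a finite-index subgroup, the finitely many $p$-exceptional (and $q$-ramification-related) prime divisors are fixed. Then $\N^1(\Gamma)$ is $p^*\N^1(X)$ plus the span of the exceptional classes, so the kernel of restriction is an extension of a finite-index-trivial action by a unipotent block. Such extensions preserve solvability, which settles the step.
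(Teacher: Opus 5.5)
The paper gives no argument here; it only cites \cite[Lemmas 4.2 and 4.4]{Hu20}, so your proposal has to stand on its own. Several parts are fine: the reduction to a $G$-equivariant generically finite surjective morphism $f\colon X\to Y$ via the normalized graph, the easy implication obtained by restricting to the $G$-stable subspace $f^*\N^1_\bR(Y)$, the identity $d_1(g_X)=d_1(g_Y)$ computed with the nef and big class $f^*H_Y$, and the deduction of $r(G|_X)=r(G|_Y)$ once solvability is known.

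The gap is the converse: $G|_{\N^1_\bR(Y)}$ solvable $\Rightarrow$ $G|_{\N^1_\bR(X)}$ virtually solvable. Saying that the dynamics would ``collapse'' is not an argument. The exceptional-divisor repair fails for two reasons. First, the decomposition $\N^1_\bR(\Gamma)=p^*\N^1_\bR(X)+\langle\text{$p$-exceptional classes}\rangle$ needs $X$ to be $\bQ$-factorial and the exceptional divisors to be $\bQ$-Cartier on $\Gamma$, and neither is assumed. For instance, a small resolution of a non-$\bQ$-factorial threefold raises the Picard number without creating any exceptional divisor. Second, and more seriously, the repair says nothing about $q\colon\Gamma\to Y$, yet the direction from $Y$ to $X$ passes through the hard implication for $q$. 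A complement of $q^*\N^1_\bR(Y)$ in $\N^1_\bR(\Gamma)$ is not spanned by ramification or exceptional divisors. For finite \'etale $q$ there are no such divisors at all, while $\rho(\Gamma)$ can still exceed $\rho(Y)$ (e.g.\ the K3 cover of an Enriques surface). No argument based on permuting divisors controls how $G$ acts on those extra classes.

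What is missing is a bound on the kernel $K$ of $G|_{\N^1_\bR(X)}\to G|_{\N^1_\bR(Y)}$. Every $g\in K$ fixes the nef and big class $f^*H_Y$, so by your own comparison the spectral radius of $g^*$ is $d_1(g)=1$. Since $g^*$ preserves the lattice $\N^1_\bZ(X)$, Kronecker's theorem makes all its eigenvalues roots of unity of degree at most $\rho(X)$. Hence $(g^*)^M$ is unipotent for a single $M$ independent of $g$. This is a Zariski-closed condition, so the identity component of the Zariski closure of $K$ contains no nontrivial torus and is unipotent; thus $K$ is virtually unipotent. Now suppose $G|_{\N^1_\bR(X)}$ contained a non-abelian free subgroup. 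Its image in the solvable group $G|_{\N^1_\bR(Y)}$ is solvable, so its intersection with $K$ is a nontrivial normal subgroup, hence non-abelian free. That contradicts virtual unipotence of $K$, so the Tits alternative gives that $G|_{\N^1_\bR(X)}$ is virtually solvable. This treats the birational and finite parts uniformly and needs no $\bQ$-factoriality. Alternatively, for the finite part of a Stein factorization, $K$ fixes an ample class and is finite by Fujiki--Lieberman.
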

The following is a crucial inequality of dynamical ranks in this paper.
\begin{proposition}
\cite[Lemmas 4.1 and 4.3]{Hu20}
\label{prop-dr}
Let $\pi: X\dasharrow Y$ be a $G$-equivariant dominant rational map of normal projective varieties with $n=\dim X>\dim Y=m>0$.
Suppose that $G|_{\N_\bR^1(X)}$ is solvable.
Then so is $G|_{\N_\bR^1(Y)}$, and we have
$$
        r(G|_X)\le r(G|_Y)+n-m-1.
$$
In particular, $r(G|_X)=n-m-1$ provided that $r(G|_Y)=0$.
\end{proposition}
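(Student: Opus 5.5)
We would first reduce to the case where $\pi$ is a morphism. Let $\Gamma\subseteq X\times Y$ be the normalization of the graph of $\pi$. Since $\pi$ is $G$-equivariant, $G$ acts diagonally on $\Gamma$, and the projection $\Gamma\to X$ is $G$-equivariant and birational. By Proposition \ref{dr-birational}, after passing to a finite-index subgroup, solvability of the action on $\N^1_\bR$ and the dynamical rank do not change when $X$ is replaced by $\Gamma$. So we may assume $\pi\colon X\to Y$ is a $G$-equivariant surjective morphism.

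For solvability, we would use that $\pi^*\colon \N^1_\bR(Y)\to\N^1_\bR(X)$ is injective, because $\pi$ is surjective. It is also $G$-equivariant, since $g_X^*\pi^*=\pi^*g_Y^*$. Hence $G|_{\N^1_\bR(Y)}$ is isomorphic to the restriction of $G|_{\N^1_\bR(X)}$ to the invariant subspace $\pi^*\N^1_\bR(Y)$. This restriction is a quotient of a solvable group, hence solvable. Replacing $G$ by a finite-index subgroup, we may take both actions to be $Z$-connected.

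For the inequality we would follow the strategy of Dinh--Sibony and Zhang for the bound $r\le n-1$ in Theorem \ref{Zhang09-thm}, made relative to $\pi$. The argument has three steps.
\begin{enumerate}
\item Apply the Lie--Kolchin/Birkhoff type theorem for the solvable $Z$-connected group preserving the nef cone, first on $Y$ and then on $X$. On $Y$ this should give common nef eigenvectors $M_1,\dots,M_m$ with $M_1\cdots M_m\neq0$ and characters $\chi_i$. Setting $L_i=\pi^*M_i$ for $i\le m$, we then add common nef eigenvectors $L_{m+1},\dots,L_n$ on $X$ such that $L_1\cdots L_n\ne0$. This amounts to $L_{m+1}\cdots L_n$ being nonzero on a general fibre, and we would obtain the $L_j$ inductively by applying the fixed-point theorem to the cones of classes of the form $L_1\cdots L_k\cdot(\text{nef})$.
\item Consider the homomorphism $\psi\colon G\to\bR^n$, $g\mapsto(\log\chi_1(g),\dots,\log\chi_n(g))$. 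Since $g^*$ preserves the top intersection number $L_1\cdots L_n$, the image lies in the hyperplane $\sum_i x_i=0$. The same reasoning on $Y$ puts the first $m$ coordinates in $\sum_{i\le m}x_i=0$, and these first $m$ coordinates depend only on $g|_Y$.
\item Show that $\ker\psi=N(G)$ and that $\psi(G)$ is a discrete subgroup. This is the analogue of Zhang's argument: we use the dynamical degree comparison $d_1(g)=\max\{d_1(g|_Y),\,d_1(g|_\pi)\}$ (Dinh--Nguyen), bound $\log d_1(g)$ above and below by the coordinates of $\psi(g)$, and use that the eigenvalues of $g^*$ are algebraic integers of bounded degree, which gives discreteness.
\end{enumerate}

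Granting these steps, project $\psi(G)$ onto the first $m$ coordinates. The image is $\psi_Y(G|_Y)$, a lattice of rank $r(G|_Y)$. The kernel of the projection lies in $\{x_1=\dots=x_m=0,\ \sum_{i>m}x_i=0\}$, a space of dimension $n-m-1$. Since $\psi(G)$ is discrete, this gives
$$\operatorname{rank}\psi(G)\le r(G|_Y)+n-m-1.$$
Because $\ker\psi=N(G)$, the left-hand side equals $r(G|_X)$. When $r(G|_Y)=0$ this gives $r(G|_X)\le n-m-1$. The stated equality in that case would need the reverse bound as well, which the cited source \cite{Hu20} provides; in the paper's use it is the upper bound that matters.

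The main obstacle is the construction in step 1 together with step 3. Specifically, we must find fibrewise common nef eigenvectors $L_{m+1},\dots,L_n$ whose product with the pulled-back $L_1,\dots,L_m$ is nonzero, and then prove that an element whose characters are all $1$ has zero entropy on $X$ and not only on $Y$. To handle both, we would first use the relative product formula for dynamical degrees to control the relative part, and then use Lemma \ref{Hodge-lem} to rule out degenerate products.
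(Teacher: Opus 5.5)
The paper gives no proof of this statement; it quotes \cite{Hu20}. So I can only judge your outline on its own terms. Three parts of it are fine: the reduction to a morphism via the graph, the descent of solvability through the injective $G$-equivariant map $\pi^*$, and the final rank count.

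Two other points are not. First, the ``in particular'' equality cannot be supplied by \cite{Hu20} or by any other source, because it is false. For $G=\{\id\}$, or any group of zero-entropy automorphisms, with $n-m\ge2$, one has $r(G|_X)=0<n-m-1$. Only $r(G|_X)\le n-m-1$ follows, and that is all the paper uses.

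Second, Step 1 is wrong as formulated: when $G$ is merely solvable, common nef eigen\emph{divisors} with non-zero top product need not exist. Take $\pi\colon X=E\times E\to Y=E$ the second projection, with $E$ without CM, and $g(x,y)=(x+y,y)$. Then $g^*$ is a single $3\times3$ unipotent Jordan block on $\N^1_\bR(X)$. Its only nef eigenvectors are therefore the multiples of the fibre class $D_0=\pi^*(\mathrm{pt})$, and $D_0^2=0$, so no $L_2$ with $L_1\cdot L_2\neq0$ exists. What the fixed-point theorem on $\overline{c_k\cdot\Nef(X)}$ actually produces is a nested chain of eigen\emph{classes} $c_{k+1}\in\overline{c_k\cdot\Nef(X)}\subseteq\N^{k+1}_\bR(X)$ with eigenvalues $\lambda_{k+1}(g)$. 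The chain starts from $c_m$, a positive multiple of $[F]$. Step 2 survives if you set $\chi_{k+1}:=\lambda_{k+1}/\lambda_k$.

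The genuine gap is Step 3. It carries the entire content of the proposition, and you only announce it.
\begin{itemize}
\item Both $\ker\psi\subseteq N(G)$ and discreteness rest on the estimate $d_1(g)\le\max_k\chi_k(g)$. For discreteness this is what bounds the Galois conjugates of the $\lambda_k(g)$.
\item By the product formula and the absolute case on $Y$, the estimate amounts to the fibrewise Dinh--Sibony statement $d_1(g|_\pi)\le\max_{k>m}\chi_k(g)$.
\item To prove it, take a Perron--Frobenius class $[F]\cdot v$ for $d_1(g|_\pi)$. Since $g^*$ is trivial on $\N^n_\bR(X)=\bR$, either $d_1(g|_\pi)=\chi_n(g)$ or $c_{n-1}\cdot v=0$.
\item In the second case you must descend the chain using the \emph{equality case} of the Hodge index inequality for the degenerate forms $(u,u')\mapsto c_k\cdot u\cdot u'$. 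This is Dinh--Sibony's proportionality lemma: for $c$ in the closed cone generated by products of nef classes, $c\cdot u\cdot u'=0$ forces $c\cdot u$ and $c\cdot u'$ to be proportional.
\end{itemize}

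Lemma \ref{Hodge-lem} cannot replace this. It needs nef and big $H_1,\dots,H_{n-1}$, and a $D$ that is non-negative on (effective divisor)$\cdot$(nef)$^{n-2}$. Here the eigenclasses are not big: an eigendivisor $v$ with eigenvalue $\chi\neq1$ satisfies $v^n=\chi^nv^n$, so $v^n=0$. Also, the differences you want to kill are not non-negative.

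Your idea does work when genuine eigen-divisors exist. Then $A=\sum_iL_i$ is nef and big, and $A\equiv\epsilon H+E$ with $E\ge0$. This gives $(g^s)^*H\cdot H^{n-1}\le\epsilon^{-1}\bigl(\max_i\chi_i(g)\bigr)^s\,A\cdot H^{n-1}$, hence $d_1(g)\le\max_i\chi_i(g)$ at once. But that is exactly the situation Step 1 cannot guarantee.
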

The following asserts that $\mathrm{Hyp}(n, r)$ is a birational invariant (cf.  \cite[Lemma 3.1]{Zhang16}).
\begin{proposition}
\label{Bir}
Assume that a group $G$ acts biregularly on both normal projective varieties $X_1$ and $X_2$ of the same dimension $n$. Let $\sigma: X_1\dashrightarrow X_2$ be a $G$-equivariant generically finite map.
\begin{enumerate}
\item[(1)] Suppose that $\sigma$ is a morphism or $\sigma$ is an isomorphism in codimension 1 with both $X_1$ and $X_2$ being $\mathbb Q$-factorial. 
If $(X_1,G)$ satisfies $\mathrm{Hyp}(n, r)$, then so does $(X_2, G)$.
\item[(2)] Suppose that $(X_a, G)$, for some $a\in\big\{1,2\big\}$, satisfies $\mathrm{Hyp}(n, r)$. Then, replacing $G$ by a finite-index subgroup, $(X_i,G)$ satisfies $\mathrm{Hyp}(n, r)$ for both $i$ in $\big\{1,2\big\}$.
\end{enumerate}	
\end{proposition}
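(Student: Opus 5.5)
The proof follows the structure in \cite[Lemma 3.1]{Zhang16}. Part (1) is reduced to checking that the pullback $\sigma^*$ (or the strict-transform identification) on $\N^1_\bR$ intertwines the $G$-actions and preserves first dynamical degrees. Part (2) is reduced to part (1) through an equivariant resolution of $\sigma$.

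\emph{Part (1).} Since $G$ acts biregularly on both $X_1$ and $X_2$, the abstract group $G\cong\bZ^r$ is the same on both sides. We only need to show that every $g\ne\id$ has $d_1(g|_{X_2})>1$. The requirement that $X_2$ have $\bQ$-factorial klt singularities is part of the hypothesis being transferred, so we take it as given in the setting where the proposition is applied, as in the MMP steps. We also need the Kodaira dimension to be unchanged. This holds because $\kappa$ is a birational invariant for varieties with klt singularities, whether $\sigma$ is a birational morphism or a small map.

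\begin{itemize}
\item \textbf{Small map.} If $\sigma$ is an isomorphism in codimension $1$ between $\bQ$-factorial varieties, then strict transform gives a $G$-equivariant isomorphism $\N^1_\bR(X_1)\cong\N^1_\bR(X_2)$. Since $d_1(g)$ equals the spectral radius of $g^*$ on $\N^1_\bR$ (Dinh--Sibony), positivity of entropy transfers.
\item \textbf{Generically finite morphism.} If $\sigma$ is a generically finite morphism, then $\sigma^*:\N^1_\bR(X_2)\to\N^1_\bR(X_1)$ is injective and $G$-equivariant. We have $d_1(g|_{X_1})=d_1(g|_{X_2})$ by the product formula for dynamical degrees under generically finite maps (Dinh--Nguyen--Truong). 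Alternatively, this follows from Proposition~\ref{dr-birational}: the ranks are equal, and $G$ has no zero-entropy nontrivial elements on $X_1$.
\end{itemize}

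\emph{Part (2).} Take a $G$-equivariant resolution $W$ of the graph of $\sigma$, for example the normalization of the graph closure. Then $G$ acts biregularly on $W$, and both projections $p_i:W\to X_i$ are generically finite $G$-equivariant morphisms. Dynamical degrees agree along $p_1$ and $p_2$, so $d_1(g|_{X_1})=d_1(g|_{X_2})$. Proposition~\ref{dr-birational} gives equality of dynamical ranks after replacing $G$ by a finite-index subgroup. This replacement is needed because $G|_{X_i}$ may fail to be faithful, or the kernel of the action on $\N^1$ may need to be removed. After passing to a finite-index subgroup we recover $G\cong\bZ^{r}$ acting faithfully with every nontrivial element of positive entropy. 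The Kodaira dimension is again unchanged, so the bound $r\le n-\kappa-1$ persists.

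\textbf{Main obstacle.} The hardest step is faithfulness and freeness on the other model. An element could act trivially on $X_{3-a}$ only if it acted with $d_1=1$ on $X_a$, since $d_1$ is preserved along $W$. This is excluded, so $G\to\Aut(X_{3-a})$ is injective. The remaining issue is to justify that the finite-index replacement does not destroy the rank. This is exactly the content of Proposition~\ref{dr-birational}.
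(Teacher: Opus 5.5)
Your core argument is the one the paper invokes through \cite[Lemma 3.1]{Zhang16}. You transport positivity of entropy along the $G$-equivariant map $\sigma^*$ on $\N^1_\bR$ (or the strict-transform isomorphism in the small $\bQ$-factorial case), using that $d_1$ is unchanged under $G$-equivariant generically finite maps, and you reduce (2) to this through the graph of $\sigma$. That part is fine, including your observation that $G\to\Aut(X_{3-a})$ is automatically injective.

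The step that fails as written is the Kodaira dimension. You invoke birational invariance, but $\sigma$ is only generically finite. For such maps one only has $\kappa(X_1)\ge\kappa(X_2)$ (ramification formula on resolutions), and the inequality can be strict for covers ramified along a divisor, e.g.\ a $K3$ double cover of a rational surface. In (1) the inequality still gives $r\le n-\kappa(X_1)-1\le n-\kappa(X_2)-1$, but in (2) with $a=2$ it points the wrong way. The right fix is not to transport the bound at all. The bound $r\le n-\kappa(X)-1$ holds automatically for any abelian group of automorphisms all of whose nontrivial elements have positive entropy (the Dinh--Sibony/Zhang bound recalled in the introduction). So it holds on each $X_i$ once the group-theoretic part is established there.

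Two smaller points. First, you are right that the $\bQ$-factorial klt condition cannot be transported, and this is a defect of the statement rather than of your proof. Suppose $\sigma$ is a $G$-equivariant flipping contraction as in Case (II) of the proof of Theorem~\ref{thm-MMP}. Then $K_{X_2}$ is not $\bQ$-Cartier: otherwise $K_{X_1}=\sigma^*K_{X_2}$ since $\sigma$ is small, contradicting $K_{X_1}\cdot\ell<0$. So $(X_2,G)$ cannot satisfy $\mathrm{Hyp}(n,r)$. The paper's one-line proof is silent on this, and in its applications the singularity condition is supplied separately, by MMP theory or by passing to a smooth resolution.

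Second, your treatment of the finite-index replacement is muddled: you first say it is needed for faithfulness and then prove faithfulness is automatic. Since $\mathrm{Hyp}(n,r)$ is a condition on $G$ itself rather than on $G|_{\N^1_\bR}$, no replacement is needed for the group-theoretic conditions. In any case, a finite-index subgroup of $\bZ^r$ is again isomorphic to $\bZ^r$ with all nontrivial elements of positive entropy, so Proposition~\ref{dr-birational} plays no role.
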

\begin{proof}
Replacing $\mathrm{Hyp}(n,n-1)$ by $\mathrm{Hyp}(n,r)$, the proof uses 
 the same argument as in \cite[Lemma 3.1]{Zhang16}.
 \end{proof}
\section{Proof of Theorem \ref{Relative-MDR}}
\label{Sec-MDR}
\begin{theorem}
\label{thm-n-kappa=2}
Let $(X,G)$ satisfy $\mathrm{Hyp}(n, n-\kappa(X)-1), \kappa(X)>0$ and $n\ge3$.
After replacing $G$ by a finite-index subgroup, the following statements hold.
\begin{enumerate}
	\item Let $\pi:=\Phi_{|mK_X|}: X\dasharrow S \subseteq \bP(H^0(X, mK_X))$ be the Iitaka fibration of $X$, where $B$ is the image of $\Phi_{|mK_X|}$ for $m\gg0$.
	Then we may assume that $\pi: X\to S$ is a regular morphism and $G|_S=\{\id\}$.
	\item Let $F$ be a very general fiber of $\pi$. 
We may identify $G$ with $G|_F\le \Aut(F)$ and $(F, G|_F)$ satisfies $\mathrm{Hyp}(n-\kappa(X), n-\kappa(X)-1)$ and $\kappa(F)=0$.
\end{enumerate}
\end{theorem}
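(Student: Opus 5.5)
We prove (1) first, then (2); the key tools are Proposition \ref{prop-dr} and Proposition \ref{Bir}.

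\textbf{Equivariance and triviality on the base.} The Iitaka fibration $\pi=\Phi_{|mK_X|}$ is $\Aut(X)$-equivariant, because every automorphism acts linearly on $H^0(X,mK_X)$ and preserves the pluricanonical system. So $G$ acts on $S$ through $G|_S\le\Aut(S)$, which restricts to a linear automorphism of the ambient projective space. Moreover, $S$ is of general type in a suitable birational sense, and $G|_S$ preserves the pluricanonical data. Hence, by the standard finiteness (Deligne--Nakamura--Ueno: the image of $\mathrm{Bir}(X)$ in $\mathrm{PGL}(H^0(X,mK_X))$ is finite), $G|_S$ is finite. Replacing $G$ by the finite-index kernel of $G\to G|_S$, we get $G|_S=\{\id\}$.

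\textbf{Making $\pi$ regular.} Take a $G$-equivariant resolution of indeterminacy $\widetilde X\to X$ of $\pi$, for instance the normalization of the closure of the graph. The $G$-action lifts because $\pi$ is equivariant. By Proposition \ref{Bir}(2), after passing to a finite-index subgroup, $(\widetilde X,G)$ still satisfies the hypothesis up to singularities; a $G$-equivariant $\mathbb Q$-factorial klt model can be arranged, and $\kappa$ is unchanged. Thus we may assume $\pi:X\to S$ is a morphism with connected fibres.

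\textbf{The fibre.} Let $F$ be a very general fibre, with $\dim F=n-\kappa$. Since $G|_S=\{\id\}$, every $g\in G$ preserves each fibre, so $G$ restricts to $G|_F\le\Aut(F)$. Easy addition gives $\kappa(F)=0$ for the Iitaka fibration, and $F$ is $\mathbb Q$-factorial klt for very general $F$ (klt is inherited by general fibres; $\mathbb Q$-factoriality for very general fibres I would handle as in \cite{Zhang16,HL21}). Apply the Dinh--Sibony bound to $(F,G|_F)$: any abelian group of positive-entropy automorphisms of $F$ has rank at most $\dim F-\kappa(F)-1=n-\kappa-1$.

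\textbf{Main obstacle: injectivity of $G\to G|_F$ and positive entropy on $F$.} For injectivity, the kernel of $G\to\Aut(F)$ consists of elements fixing a very general fibre pointwise. By a countability argument over very general points, such an element is the identity on a dense set, hence trivial. Concretely, the subgroup fixing $F$ pointwise is the same for all very general $F$ since $G$ is countable, so its elements fix a Zariski-dense set.

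For positive entropy, I would use the relative dynamical degree product formula of Dinh--Nguyen--Truong: for $g$ preserving fibres with trivial action on the base,
\[
d_1(g)=\max\{d_1(g|_S),\,d_1(g|_F)\}=d_1(g|_F).
\]
Hence $d_1(g|_F)=d_1(g)>1$ for $g\ne\id$. Alternatively, apply Proposition \ref{prop-dr}: since $r(G|_S)=0$, it gives $r(G)\le n-\kappa-1$ with equality forced, and the positive-entropy rank must be carried by the fibre.

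Together these show $G\cong G|_F\cong\mathbb Z^{n-\kappa-1}$, every nontrivial element has positive entropy on $F$, and $1\le n-\kappa-1=\dim F-\kappa(F)-1$, which is $\mathrm{Hyp}(n-\kappa,n-\kappa-1)$.
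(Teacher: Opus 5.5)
Your argument is the paper's proof. Deligne--Nakamura--Ueno makes $G|_S$ finite and you pass to the kernel. You then resolve the graph equivariantly to make $\pi$ regular. Finally, the Dinh--Nguyen(--Truong) product formula with $d_1(g|_S)=1$ gives $d_1(g|_F)=d_1(g)>1$.

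The one step you leave unfinished is the singularity condition built into $\mathrm{Hyp}(n-\kappa,n-\kappa-1)$. You work with the normalized graph plus an unspecified ``$\mathbb{Q}$-factorial klt model'', and you defer $\mathbb{Q}$-factoriality of $F$ to the literature. But $\mathbb{Q}$-factoriality of the total space does not in general pass to (very) general fibres, because monodromy can interchange non-$\mathbb{Q}$-Cartier Weil divisor classes on the fibres. So as written, this part of the hypothesis for $(F,G|_F)$ is not established.

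The paper sidesteps this by taking a $G$-equivariant (functorial) resolution. Then $X$ is smooth, a very general fibre $F$ is smooth by generic smoothness, and so $F$ is trivially $\mathbb{Q}$-factorial klt. Proposition \ref{Bir} keeps $\mathrm{Hyp}(n,n-\kappa-1)$ on the new $X$ after passing to a finite-index subgroup. You should do the same.

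Three smaller points:
\begin{enumerate}
\item Faithfulness needs no separate countability argument; yours is fine, just redundant. If $g|_F=\id$ then $d_1(g|_F)=1<d_1(g)$, so $G\to\Aut(F)$ is injective once positive entropy on $F$ is known. This is how the paper concludes.
\item Your ``alternative'' via Proposition \ref{prop-dr} does not work. A rank inequality for $X\to S$ says nothing about how individual elements act on $F$.
\item $\kappa(F)=0$ is part of Iitaka's fibration theorem. Easy addition only yields $\kappa(F)\ge 0$ here.
\end{enumerate}
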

\begin{proof}
By the Deligne-Nakamura-Ueno theorem (\cite[Theorem 14.10]{Ueno75}), $G$ descends to a finite group $G|_S$ acting on $S$ biregularly.
Replacing $G$ by a finite-index subgroup, which does not change its dynamical rank by Proposition \ref{dr-birational}, we may assume that $G|_S=\{\id\}$.
Further, replacing $X$ and $S$ by $G$-equivariant resolutions of singularities of the graph $\Gamma_\pi$ of $\pi$ and of $S$, we may also assume that $\pi$ is a regular morphism and $X$ is smooth, since by Proposition \ref{Bir} the new pair $(X,G)$ still satisfies $\mathrm{Hyp}(n,n-\kappa(X)-1)$.
Note that $r(G|_S)=0$.
By Proposition \ref{prop-dr}, $0<r(G|_X)\le n-\kappa(X)-1$ implies that $\kappa(X)\le n-2$.
Thus, $\pi: X\to S$ is a nontrivial $G$-equivariant fibration.
Since $G$ acts trivially on the base, $G$ acts naturally and regularly on the very general fiber $F$ of $\pi$.
For any $g\in G$, let $g_F$ denote the induced automorphism of $g$ on $F$.
By the product formula (cf. \cite[Theorem 1.1]{DN11}), $d_1(g|_F)=d_1(g)>1$ if $g\neq\id$.
Therefore, $G$ acts faithfully (and also regularly) on $F$ so that we can identify $G$ with $G|_F\le \Aut(F)$ and $(F, G|_F)$ satisfies $\mathrm{Hyp}(n-\kappa(X), n-\kappa(X)-1)$.
Note that $F$, as a very general fiber of the Iitaka fibration, has $\kappa(F)=0$.
\end{proof}
Now we recall a result of Dinh and  Sibony \cite[Theorems~4.3 and 4.7]{DS04}.
\begin{proposition}
\cite[Lemma 3.4]{Zhang16}
\label{DS}
Let $X$ be a normal projective variety and $G\le\mathrm{Aut}(X)$.
Suppose that $G^*:=G|_{\N_{\mathbb R}^1(X)}\cong\mathbb Z^{\oplus r}$ and every non-trivial element of $G^*$ is of positive entropy.
Then there are nef $\mathbb R$-Cartier divisors $L_i$ ($1\le i\le r+1)$ such that $L_i$ is a common eigenvector of $G$ with
\begin{equation*}
g^*L_i=\chi_i(g)L_i~~(g\in G)
\end{equation*}
for some $\chi_i(g)\in\mathbb R_{>0}$, such that
\begin{equation*}
L_1\cdots L_{r+1}\ne 0
\end{equation*}
as an element of $N_{\bR}^{r+1}(X)$, and such that the homomorphism
\begin{equation*}\begin{split}
\varphi &: G\rightarrow (\mathbb R^r, +)\\& g\mapsto (\mathrm{log}~\chi_1(g), \cdots, \mathrm{log}~\chi_r(g))
\end{split}\end{equation*}
has the kernel equal to $N(G)$ and hence it induces an isomorphism from $G/N(G)\cong G^*$ onto a spanning lattice of $(\mathbb R^{r}, +)$.	
\end{proposition}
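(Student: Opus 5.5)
The plan is to follow the Dinh--Sibony strategy. First build a flag of common nef eigenvectors whose successive intersection products stay nonzero. Then read off the characters and show they realize $G/N(G)$ as a lattice.

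\textbf{Step 1 (a common nef eigenvector).} Since $G^*\cong\mathbb Z^{\oplus r}$ is abelian and preserves the closed, salient, full-dimensional cone $\Nef(X)\subset\N^1_{\bR}(X)$, the commuting family $G^*$ has a common eigenvector in $\Nef(X)$. To see this, fix generators $g_1,\dots,g_r$ of $G^*$. By the Perron--Frobenius (Birkhoff) theorem, $g_1$ has an eigenvector in the cone with eigenvalue equal to its spectral radius. The intersection of the corresponding generalized eigenspace with the cone is a nonzero closed cone preserved by $g_2,\dots,g_r$, by commutativity. Iterating over the generators yields a nef $L_1$ with $g^*L_1=\chi_1(g)L_1$ and $\chi_1(g)>0$.

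\textbf{Step 2 (building the flag).} Suppose nef common eigenvectors $L_1,\dots,L_k$ have been found with $L_1\cdots L_k\neq0$ in $\N^k_{\bR}(X)$. Consider the closed convex cone $C_k$ in $\N^{k+1}_{\bR}(X)$ generated by the classes $L_1\cdots L_k\cdot L$ with $L$ nef. This cone is $G$-stable because $L_1\cdots L_k$ is an eigenvector, and it is salient because its elements pair nonnegatively with nef classes. It is also nonzero: by Lemma \ref{Hodge-lem}-type positivity, $L_1\cdots L_k\cdot A^{n-k}>0$ for ample $A$.

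Applying the argument of Step 1 inside $C_k$, and lifting back to the nef cone, gives a nef common eigenvector $L_{k+1}$ with $L_1\cdots L_{k+1}\neq0$. To keep the characters in ``general position'', at each stage I choose the eigenvector attached to the spectral radius of a suitably generic element $g\in G$. The process runs up to $k=n$, where $L_1\cdots L_n$ is a positive number fixed by automorphisms. This gives the relation $\prod_{i=1}^n\chi_i(g)=1$.

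\textbf{Step 3 (kernel and lattice).} This is the main obstacle. I would argue three points in turn.
\begin{enumerate}
\item[(a)] \emph{Kernel.} We need $\ker\varphi=N(G)$. If $g$ has zero entropy, all eigenvalues of $g^*$ on $\N^1_{\bR}$ have modulus $1$, because they are algebraic integers and the spectral radius is $d_1(g)=1$. Hence every $\chi_i(g)=1$. Conversely, if $\chi_1(g)=\dots=\chi_r(g)=1$ for the right choice of indices, I would use the Khovanskii--Teissier/mixed Hodge--Riemann inequalities on $L_1\cdots L_k$. These should force the spectral radius $d_1(g)$ to equal the largest $\chi_i(g)$ along the flag, and hence to equal $1$.
\item[(b)] \emph{Discreteness.} The image $\varphi(G)$ is discrete. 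The $\chi_i(g)$ are eigenvalues of integral matrices $g^*|_{\N^1_\bZ}$ of bounded size, so only finitely many $g^*$ have all eigenvalues in a fixed compact annulus, by a Kronecker-type argument.
\item[(c)] \emph{Spanning.} Since $G/N(G)\cong\mathbb Z^r$ maps injectively with discrete image into the hyperplane $\sum_i\log\chi_i=0$, which has dimension $n-1\ge r$, I can reorder the $L_i$ so that $r$ of the coordinates $\log\chi_i$ are linearly independent on $G/N(G)$. I then keep those $r$ coordinates together with one further $L_{r+1}$ so that $L_1\cdots L_{r+1}\neq0$ still holds, since it is a sub-product of a nonzero flag. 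This makes the image a spanning lattice of $\mathbb R^r$.
\end{enumerate}
The delicate points are arranging the reordering without losing nonvanishing, and proving point (a). I expect to handle both by choosing, at each step of the flag, the eigenvector of a generic element realizing the dynamical degree, as in \cite[Theorems~4.3 and 4.7]{DS04}.
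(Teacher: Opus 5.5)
\noindent The paper does not prove this proposition; it quotes it from \cite[Theorems~4.3 and 4.7]{DS04} via \cite[Lemma 3.4]{Zhang16}. Your outline has a genuine gap in Step~2. First, a common eigenvector in $C_k$ need not lift to a nef common eigenvector $L_{k+1}$. More seriously, the flag cannot in general be continued to length $n$.

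Here is an example. Let $E$ be an elliptic curve without CM and $X=E^4$. Then $\N^1_{\bR}(X)\cong\mathrm{Sym}_4(\bR)$, with $\Nef(X)$ the positive semidefinite cone. Choose the group automorphism $g$ with $g^*M=QMQ^{T}$, where $Q=\begin{pmatrix}B&I_2\\ 0&B\end{pmatrix}$ and $B\in SL_2(\bZ)$ has eigenvalues $\lambda^{\pm1}$ ($\lambda>1$) with eigenvectors $v_\pm$. Then $G=\langle g\rangle$ satisfies the hypotheses with $r=1$.
\begin{itemize}
\item[(i)] Suppose $M\succeq0$ and $QMQ^{T}=\chi M$. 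Then $Q^{T}$ preserves $\ker M$ and acts on $\bR^4/\ker M$ as $\sqrt{\chi}$ times an isometry of a positive definite form, so it acts there as a scalar $\mu\in\{\lambda^{\pm1}\}$. Since $Q^{T}$ has Jordan type $J_2(\lambda)\oplus J_2(\lambda^{-1})$, $\operatorname{coker}(Q^{T}-\mu)$ is a line, so $\operatorname{rank}M\le1$. Hence the only nef common eigenvectors are the rays through $M_\pm=w_\pm w_\pm^{T}$, $w_\pm=(v_\pm,0)$.
\item[(ii)] Rank-one classes on $E^4$ square to zero. So $M_+M_-\neq0$, but every product of three nef common eigenvectors vanishes. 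The longest flag has length $2=r+1<4=n$; this is consistent with the proposition but not with your Step~2.
\item[(iii)] Take $L_1=M_+$ and $y=(0,v_+)$. Then $Qy=\lambda y+w_+$, so $M_+\cdot yy^{T}\in C_1$ is the eigenvector of $g^*$ for its spectral radius $\lambda^4$ on $C_1$. Yet it is not $M_+\cdot L$ for any nef eigenvector $L$, so the spectral-radius recipe for $g$ picks exactly a non-liftable class.
\end{itemize}
The obstruction is a Jordan block straddling $\ker(L\mapsto L_1\cdots L_k\cdot L)$, which no abstract cone argument rules out.

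Consequently Step~3 has nothing to stand on. The relation $\prod_{i=1}^n\chi_i=1$ and the $(n-1)$-dimensional hyperplane in (c) presuppose a length-$n$ flag. So does the one clean proof of the converse in (a): for a genuine length-$n$ flag, $\sum_iL_i$ is nef and big, and comparing it with an ample class gives $d_1(g)=\max_i\chi_i(g)$. With only $r+1$ classes, $\sum_iL_i$ is not big, and the appeal to Khovanskii--Teissier is only a hope.

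Step~3(b) is also wrong as written, for two reasons: $\varphi(g)$ controls only the $\chi_i(g)$, not all eigenvalues of $g^*$; and infinitely many integral matrices can share a characteristic polynomial. It is, however, unnecessary. Since $N(G)\subseteq\ker\varphi$ and $G/N(G)\cong\bZ^r$, once $\varphi(G)$ spans $\bR^r$ the $\bR$-linear extension of $\varphi$ is invertible, which gives injectivity and discreteness at once. (The reordering worry in (c) is also harmless: sub-products of a nonzero product of nef classes are nonzero.)

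So the real content is a procedure producing $r+1$ honest nef common eigenvectors with nonzero product, together with a direct proof that $r$ of their log-characters are linearly independent on $G/N(G)$. This is what \cite[Theorems~4.3 and 4.7]{DS04} supplies, and your proposal does not. A minor point: in Step~1 you should intersect the cone with the eigenspace $\ker(g_1^*-\rho(g_1^*))$, not the generalized eigenspace, or you only obtain a generalized eigenvector of $g_1$.
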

\begin{setup}
\label{Setup}
Now assume that $(X,G)$ satisfies $\mathrm{Hyp}(n,n-\kappa-1)$ and $\kappa>0$.
By Proposition \ref{DS}, there are nef $\bR$-divisors $L_i$ $(1\le i\le n-\kappa)$ on $X$ such that $L_i$ is a common eigenvector of $G$.
Let $F$ be the very general fiber of the Iitaka fibration of $\varphi|_{mK_X}: X \to S$.
Notice that for each $1\le i\le n-\kappa$,  $L_i\cdot F$ is a common eigenvector of $G|_F$.
Set $A:=L_1+\cdots +L_{n-\kappa}$.
Then $((L_1 +\cdots+L_{n-\kappa})|_F)^{n-\kappa}\ge (L_1\cdot F)\cdots (L_{n-\kappa}\cdot F)$ and the latter is non-zero in $H^{n-\kappa,n-\kappa}(F,\bR)=\bR$ by Proposition \ref{DS}, and indeed positive since $L_i$ is nef.
Thus, $A$ is a nef and relatively big $\bR$-Cartier divisor over $S$.
By the relative version of Kodaira's lemma (cf. \cite[Theorem 1.5.7(2)]{Kawamata24}), for an integer $k\gg1$, we have $A=A_k+E/k$ such that $A_k$ is a relatively ample $\bQ$-Cartier divisor and $E$  is a fixed effective $\bR$-Cartier divisor.
Let $H_S$ be an ample divisor on $S$.
By \cite[Proposition 1.45]{KM98}, $A_k^*:=A_k+n\pi^*H_B$ is ample for $n\gg0$.
 Take $B$ to be a general member of $|mA_k^*|$ for some $m\gg1$.
Let $A_k^*=\frac{1}{m}B$.
Notice that $A_k^*\sim_S A_k$.
Notice that $X$ has at worst $\bQ$-factorial klt singularities and $K_X$ is fixed by $G$.
Let $A^*=A_k^*+E/k$.
Then $(X, A^*)$ is klt.
Then $K_X+A^*$ is ample (big, nef) if and only if $K_X+A$ is relatively ample (big, nef).
So in the following G-EMMP over the base space $S$, we relabel $A^*$ as $A$, and we may assume that $(X, A)$ is klt since their difference $n\pi^*H_S$ is stabilized by $G$.
Hence, we prove the following lemma.
\end{setup}
\begin{lemma}
\label{lemma-A-big}
Suppose $(X,G)$ satisfies $\mathrm{Hyp}(n,n-\kappa-1)$ and $\kappa>0$.
Then we have:
\begin{enumerate}
	\item $A=L_1+\cdots+L_{n-\kappa}$ in \ref{Setup} is a relatively nef and big $\bR$-Cartier divisor.
	\item For integer $k\gg 1$, we have $A=A_k+E/k$ such that $A_k$ is a relatively ample $\bQ$-Cartier divisor and $E$ is a fixed effective $\bR$-Cartier divisor.
	\item After replacing $A_k$ by some $\mathbb{Q}$-linearly equivalent divisor $A_k' \sim_{\mathbb{Q}} A_k$, we set $A' := A_k' + E/k$, which satisfies $A' \equiv_S A$ over the base $S$ and still decomposes as a sum of nef $G$-eigenvectors. Replacing the original $A$ by $A'$, we may assume that $A$ is effective and the pair $(X,A)$ has at worst klt singularities.
	\end{enumerate}
\end{lemma}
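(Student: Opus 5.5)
We follow the construction of \ref{Setup} and check each claim in turn. By Theorem \ref{thm-n-kappa=2} we may, after passing to a finite-index subgroup and a $G$-equivariant model, assume that $\pi\colon X\to S$ is a morphism, $G|_S=\{\id\}$, and that $G$ acts faithfully on a very general fiber $F$ with $(F,G|_F)$ satisfying $\mathrm{Hyp}(n-\kappa,n-\kappa-1)$.

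\textbf{Part (1).} Proposition \ref{DS} applied to $G$ on $X$ (rank $r=n-\kappa-1$) gives nef common eigenvectors $L_1,\dots,L_{n-\kappa}$ with $L_1\cdots L_{n-\kappa}\neq0$ in $\N^{n-\kappa}_\bR(X)$. The main point is to transfer this nonvanishing to $F$, which has dimension exactly $n-\kappa$. We would apply Proposition \ref{DS} instead to $G|_F$ acting on $F$, which satisfies the hypotheses by Theorem \ref{thm-n-kappa=2}(2). The concern is that eigenvectors produced on $F$ need not come from $X$. To handle this, we take the $L_i$ on $X$ to be limits of normalized pullbacks $(g^s)^*H$ of an ample $H$, as in the Dinh--Sibony construction. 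Restriction to $F$ is $G$-equivariant, so $L_i|_F$ are nef eigenvectors of $G|_F$ with the same characters. Using the product formula $d_1(g|_F)=d_1(g)$ and the lattice statement, the characters on $F$ are in general position, and the Dinh--Sibony nonvanishing argument run on $F$ gives $(L_1|_F)\cdots(L_{n-\kappa}|_F)>0$. By multilinearity and nefness,
\[
(A|_F)^{n-\kappa}\ge (L_1|_F)\cdots(L_{n-\kappa}|_F)>0 .
\]
Hence $A|_F$ is nef and big. Bigness on the very general fiber is equivalent to relative bigness over $S$, and $A$ is nef, hence relatively nef.

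\textbf{Part (2).} This is the relative Kodaira lemma \cite[Theorem 1.5.7(2)]{Kawamata24} for a relatively nef and big $\bR$-Cartier divisor on a $\bQ$-factorial variety. It gives an effective $E$ with $A-\epsilon E$ relatively ample for small $\epsilon=1/k$. If needed, we perturb by a small relatively ample class to make $A_k$ a $\bQ$-divisor; relative ampleness is an open condition, so this is harmless.

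\textbf{Part (3).} Choose $H_S$ ample on $S$. By \cite[Proposition 1.45]{KM98}, $A_k+N\pi^*H_S$ is ample for $N\gg0$. Take a general member $B\in|m(A_k+N\pi^*H_S)|$ and set $A_k'=B/m$. Then $A_k'\equiv_S A_k$, and $A':=A_k'+E/k\equiv_S A$. The eigenvector property holds modulo $\pi^*\N^1(S)$, which $G$ fixes since $G|_S=\{\id\}$. For the klt property, Bertini makes $(X,B/m)$ klt for $m\gg0$, because $X$ is klt and $B$ is general. Replacing $E/k$ by $E/k'$ with $k'$ larger keeps the discrepancies above $-1$, since klt is an open condition in the coefficients. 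So $(X,A')$ is klt, and its adjoint divisor differs from $K_X+A$ only by a $G$-invariant pullback from $S$.

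The main obstacle is the nonvanishing $(L_1|_F)\cdots(L_{n-\kappa}|_F)\neq0$ in part (1). The inequality in \ref{Setup} silently assumes it, and the nonvanishing in $\N^{n-\kappa}(X)$ does not by itself imply it on fibers. We would first try to derive it from Lemma \ref{Hodge-lem} together with the eigen-character argument applied on $F$.
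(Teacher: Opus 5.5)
Your treatment of (2) and (3) matches the paper's argument in \ref{Setup}: the relative Kodaira lemma, then twisting $A_k$ by $N\pi^*H_S$ and taking a general member. One small correction: fix $k$ with $(X,E/k)$ klt before choosing $B$, since $A_k$ depends on $k$. Your route for (1) is also the paper's.

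The gap is the one you flag yourself: the proposal never proves $(L_1|_F)\cdots(L_{n-\kappa}|_F)\neq0$. The paper settles this in one line ``by Proposition \ref{DS}''. You are right that the statement of Proposition \ref{DS} on $X$ does not give it. Take the paper's own example $X=Y\times F$ with $F$ a K3 surface, $G=\langle \id_Y\times g\rangle$ (so $r=1$), and projections $p_Y,p_F$. The classes $L_1=p_F^*v_+$ (where $g^*v_+=\lambda v_+$) and $L_2=p_Y^*w$ (where $w$ is ample on $Y$) satisfy every conclusion of Proposition \ref{DS}, including $L_1\cdot L_2\neq0$ in $\N^2_\bR(X)$. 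Yet $L_2|_F\equiv0$ and $(A|_F)^2=v_+^2=0$, so this $A$ is not relatively big.

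Your repair does not close the gap. The Dinh--Sibony argument on $F$ actually needs two things:
\begin{enumerate}
\item[(a)] $L_i|_F\not\equiv0$ for every $i$;
\item[(b)] the characters $\chi_1,\dots,\chi_{n-\kappa}$ are pairwise distinct.
\end{enumerate}
Given these, the collinearity lemma finishes the job. That lemma says: for nef $x_1,\dots,x_k,y,z$ on $F$ with $k\le\dim F-2$, if $x_1\cdots x_k\cdot y\cdot z=0$ then $x_1\cdots x_k\cdot y$ and $x_1\cdots x_k\cdot z$ are parallel. Parallel nonzero common eigenvectors have equal characters, so induction over all sub-products shows that every sub-product, in particular the top one, is nonzero.

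Proposition \ref{DS} constrains only $\chi_1,\dots,\chi_r$ with $r=n-\kappa-1$, and says nothing about (a). The example above has $\chi_1\neq\chi_2$ and fails only because of (a), so ``general position of the characters'' cannot be the operative condition.

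Your alternative, taking normalized limits of $(g^s)^*H$, has three problems:
\begin{itemize}
\item It produces an eigenvector of the single element $g$, not a common eigenvector of $G\cong\bZ^r$ once $r\ge2$.
\item The product formula $d_1(g|_F)=d_1(g)$ only matches exponential growth rates, so it does not show that such a limit restricts to a nonzero class on $F$.
\item Nothing shows that $n-\kappa$ distinct characters occur.
\end{itemize}
Lemma \ref{Hodge-lem} does not help directly either: its hypothesis (ii) requires nef and big classes, and the $L_i|_F$ are not known to be big. So part (1) still needs an actual construction of nef common eigenvectors on $X$ satisfying (a) and (b), or some other proof that $A|_F$ is big.
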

\begin{theorem}
\label{thm-MMP}
Let $(X,G)$ satisfy $\mathrm{Hyp}(n,n-\kappa-1)$ and $\kappa>0$.
Let $\pi: X\dasharrow S$ be the Iitaka fibration.
Assume  $n\ge3$ and the termination of flips holds.
Let $A=\sum L_i$ be the sum in Section \ref{Setup} of nef $\bR$-divisors $L_i$ which are also $G$-eigenvectors such that $A$ is a relatively nef and big divisor.
After replacing $G$ by a finite-index subgroup and $A$ by a large multiple, then the following hold.
\begin{enumerate}
	\item There is a sequence of birational maps:
	$$
	X=X_0\overset{\tau_0}{\dasharrow} X_1 \overset{\tau_1}{\dasharrow}  \cdots \overset{\tau_{s-1}}\dasharrow X_s\overset{\tau_s}{\dasharrow} X_{s+1}=Y
	$$
	such that each $X_j\dasharrow X_{j+1}$ $(0\le j<s)$ is either a divisorial contraction of a $K_{X_j}$-negative extremal ray or a $K_{X_j}$-flip over $B$.
		\item The pair $(X_i, A_i)$ and $X_i, (0\le i\le s+1)$ have at worst klt singularities and $X_i$ is $\bQ$-factorial, where $A_i =(\tau_{i-1}\circ \cdots\circ \tau_0)_*A$.
	\item  The induced action of $G$ on each $X_i$ $(0\le i\le s+1)$ is biregular, and each $(X_i,G)$ satisfies $\mathrm{Hyp}(n,n-\kappa(X)-1)$.
\item $L_{j,i}=(\tau_{i-1}\circ \cdots\circ \tau_0)_*L_j$  is a relatively nef $\bR$-Cartier $G$-eigenvector.
Let $A_i=(\tau_{i-1}\circ \cdots\circ \tau_0)_*A$.
Then $A_i$ is a relatively nef and big $\bR$-Cartier divisor.
Further, $L_{j,i}=\tau_i^*L_{j,i+1}$ and $A_i=\tau_i^*A_{i+1}$.
\item $K_Y$ is an $\bR$-Cartier divisor and $K_{X_s}=\tau_s^*(K_Y)$.
\item $K_Y+A_Y$ is a relatively ample divisor, where  $A_Y:=A_{s+1}=\tau_{s*}A_s$.
Thus, $K_{X_s}+A_s=\tau_s^*(K_Y+A_Y)$ is a relatively nef and big divisor.
\end{enumerate}
As a result, the very general fiber $F$ of the Iitaka fibration of $\varphi_{mK_Y}$  is a klt Calabi-Yau surface or a $Q$-abelian variety.
\end{theorem}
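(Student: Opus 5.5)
We will run a $(K_X+A)$-MMP over $S$, using the $G$-eigenvector structure of $A=\sum L_i$ to keep every step $G$-equivariant. By Theorem \ref{thm-n-kappa=2}, after passing to a finite-index subgroup we may assume that $\pi\colon X\to S$ is a morphism, that $G|_S=\{\id\}$, and that $G$ acts faithfully on the very general fiber $F$. By Lemma \ref{lemma-A-big} we may take $A$ effective with $(X,A)$ klt and $A\equiv_S\sum L_i$.

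The key observation is the following. If $K_X+A$ is not nef over $S$, pick a $(K_X+A)$-negative extremal ray $R$ of $\NE(X/S)$. Since $A$ is nef over $S$, we have $K_X\cdot R<0$, so $R$ is $K_X$-negative. To make the contraction equivariant, note that $G$ acts on $\NE(X/S)$ preserving $K_X$ and scaling each $L_i$. After replacing $A$ by a large multiple $tA$, the $(K_X+tA)$-negative part of the cone is cut out by $A\cdot R=0$ together with $K_X\cdot R<0$. Indeed, any ray with $A\cdot R>0$ becomes positive, since the length of extremal rays is bounded by $2n$ via the cone theorem, and one scales $A$ so that $A\cdot C\ge 1$ on rational curves with $A\cdot C>0$. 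For this we need a uniform lower bound, which we would obtain by using \cite[Lemma 6.2]{MZ18}-type finiteness: there are only finitely many $K_X$-negative extremal rays with $A\cdot R=0$ in the face $A^\perp\cap \NE(X/S)$.

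That face is $G$-stable because $g^*A=\sum\chi_i(g)L_i$ and each $L_i$ is nef, so $A\cdot R=0$ forces $L_i\cdot R=0$ for all $i$. Hence $G$ permutes the finitely many $K_X$-negative extremal rays in $A^\perp$. Passing to a finite-index subgroup, each such ray is $G$-fixed, so its contraction and any flip are $G$-equivariant, and $G$ acts biregularly on $X_1$. This gives (1) and (3), with Proposition \ref{Bir} preserving $\mathrm{Hyp}(n,n-\kappa-1)$ and the standard klt/$\bQ$-factorial preservation giving (2). Since $L_j\cdot R=0$, the cone theorem's descent statement shows $L_j=\tau^*L'_j$ for a divisorial contraction; for a flip we use the common pullback to the flipped side. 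This yields (4), including nefness and the eigenvector property on $X_{i+1}$. The finiteness needed here, over the finite-dimensional $\N_1(X/S)$, is where care is required; the main obstacle is uniformity of the finite-index subgroup across steps, which we handle by reapplying the argument at each of the finitely many steps. Termination of flips then yields $X_s$ with $K_{X_s}+A_s$ nef over $S$. It is also big over $S$, since $A_s$ is relatively big and $K_{X_s}$ is relatively pseudoeffective, as $\kappa(F)=0$. By the relative basepoint-free theorem, $K_{X_s}+A_s$ is semiample over $S$, giving the $G$-equivariant birational morphism $\tau_s\colon X_s\to Y$ with $K_Y+A_Y$ relatively ample. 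This proves (6).

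For (5) and the final claim, restrict to a very general fiber $F_s$ of $X_s\to S$. There $K_{F_s}=K_{X_s}|_{F_s}$, with $\kappa(F_s)=0$, and $(F_s,G)$ satisfies $\mathrm{Hyp}(d,d-1)$ with $d=n-\kappa$. We want $K_{F_s}\equiv 0$; this is the step we expect to be the main obstacle. We would apply Lemma \ref{Hodge-lem} with $D=K_{F_s}$, taking $H_1,\dots,H_{d-1}$ to be suitable products of the $L_i|_{F_s}$. Since $g^*K=K$ while $\prod_i\chi_i(g)$ takes values spanning a lattice by Proposition \ref{DS}, equivariance forces $K_{F_s}\cdot L_{i_1}\cdots L_{i_{d-1}}=0$. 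Condition (i) holds because $K_{F_s}$ is effective up to $\bQ$-linear equivalence ($\kappa=0$) and nef in the relevant sense after the MMP. Thus $K_{F_s}\equiv0$, so the contraction $\tau_s$ is $K$-trivial, which gives (5). Finally, on $Y$ the very general fiber $F_Y$ is klt with $K_{F_Y}\equiv 0$. If $d=2$, abundance for surfaces gives a klt Calabi–Yau surface. If $d\ge3$, $(F_Y,G)$ has maximal dynamical rank $d-1$; since $\kappa=0$ it is not rationally connected, so Zhang's theorem \cite{Zhang16} shows $F_Y$ is a $Q$-abelian variety.
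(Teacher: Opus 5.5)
Your MMP construction is essentially the paper's Step 1. The decisive step $K_F\equiv 0$, however, has a genuine gap. You claim condition (i) of Lemma~\ref{Hodge-lem} for $D=K_{F_s}$ because $K_{F_s}$ is $\bQ$-effective and ``nef in the relevant sense after the MMP''. Neither gives (i).
\begin{itemize}
\item Condition (i) must hold against every effective divisor, and an effective $D$ can fail it against its own components: an exceptional prime divisor $\Gamma$ has $\Gamma^2\cdot H^{d-2}<0$.
\item You ran a $(K_X+A)$-MMP, so only $K_{X_s}+A_s$ is relatively nef, not $K_{X_s}$.
\end{itemize}
Your eigenvalue argument for $K_F\cdot L_{i_1}\cdots L_{i_{d-1}}=0$ is correct; it is the content of \cite[Lemma 3.7]{Zhang16}. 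But for effective $D\sim_{\bQ}K_F$ and nef and big $A|_F$, the vanishing $D\cdot(A|_F)^{d-1}=0$ only places each component of $D$ in the null locus of $A|_F$. It does not force $D=0$.

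The missing idea is the paper's argument:
\begin{itemize}
\item Take Nakayama's divisorial Zariski decomposition $K_F=P+N$. Minimality of the negative part gives $g^*N=N$, so every component $N_i$ is $G$-periodic.
\item $P$ is movable, so it satisfies (i), and Lemma~\ref{Hodge-lem} gives $P\equiv0$.
\item For $N$, use that $A_Y|_F$ is numerically trivial on every $G$-periodic proper subvariety \cite[Proposition 3.11(7)]{Zhang16}. Hence $N|_{N_i}\equiv(K_F+A_Y|_F)|_{N_i}$ is nef, and this is exactly what yields (i) for $N$ when $M=N_i$. So $N\equiv0$, and therefore $N=0$.
\end{itemize}

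Several further steps are unjustified.

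\textbf{The final contraction.} You call $\tau_s$ $G$-equivariant without argument and deduce (5) from $K_{F_s}\equiv0$.
\begin{itemize}
\item Since $g^*(K+A)=K+\sum_j\chi_j(g)L_j\neq K+A$, the null face of $K_{X_s}+A_s$ need not be $G$-stable.
\item Numerical triviality of $K$ on very general fibres says nothing about curves contracted by $\tau_s$ inside special fibres over $S$.
\end{itemize}
The paper handles both points by rescaling. For $t>1$, the null face of the relatively nef and big divisor $K+tA=(K+A)+(t-1)A$ is $K^\perp\cap\bigcap_j L_j^\perp\cap\NE(X_s/S)$. This face is $G$-stable and $K$-trivial, so the contraction is equivariant and $K_{X_s}=\tau_s^*K_Y$. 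This proves (5) before, and independently of, the fibre argument.

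\textbf{Scaling in the MMP step.} Your justification is a non sequitur. $A$ is a sum of irrational eigenclasses, so no uniform positive lower bound for $A\cdot C$ is available. Also, finiteness of rays inside $A^\perp$ is not what is needed. What you need is finiteness of all $(K_X+A)$-negative extremal rays over $S$, which holds by \cite[Corollary 3.8.2]{BCHM10} since $A$ is relatively big. Because $A$ is nef, the $(K_X+tA)$-negative rays for $t\ge1$ form a subset of this finite set, and for $t\gg1$ only those in $A^\perp$ survive.

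\textbf{Smaller points.} Relative bigness of $K_X+A$ is also what excludes Fano contractions in (1), which you do not address. For $d\ge3$, Zhang's theorem as quoted in the introduction gives only $G$-equivariant birationality to a $Q$-abelian variety. The paper instead applies \cite[Lemmas 3.9 and 3.10]{Zhang16} directly to $F$ once $K_F\sim_{\bQ}0$.
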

\begin{proof}
{\bf Step 1. Existence of a scaled G-EMMP relative to the Iitaka fibration.}
Write $A=A_k+E/k$  as in Lemma \ref{lemma-A-big}, so that $(X, A)$ is klt.
Fix a relatively ample Cartier divisor $M$ such that the pair $(X, A+M)$ is nef and klt.
This is true because klt is an open condition, so we can  replace $M$ by $(\frac{1}{c})M'$ with $M'$ a general member of $|c(M+n\pi^*H_S)|$ for some large integers $c$ and $n$ and an ample divisor $H_S$ on $S$.

If $K_X+A$ is relatively nef, then $(X, A)$ is the final model and we stop the MMP.
Suppose $K_X+A$ is not relatively nef.
By Lemma \ref{lemma-A-big}, $A$ is larger than some relatively ample divisor $A_k$ for $k\gg 1$.
Then by the Cone theorem (cf. \cite[Corollary 3.8.2]{BCHM10}), 
there are only finitely many $K_X+A$-negative extremal rays $R=\bR_{\ge0}[\ell]$  generated by a rational curve in $\NE(X/S)$.
We may assume that all these $\ell$ satisfy $A\cdot \ell=0$ and $K_X\cdot\ell<0$; otherwise, after replacing $A$ by a large multiple, we would have $(K_X+A)\cdot\ell>0$ for all these finitely many $\bR_{\ge0}[\ell]$, and hence $K_X+A$ is relatively nef.
Since $A=\sum_{i=1}^{n-\kappa} L_i$ and each $L_i$ is nef, $A\cdot \ell=0$ implies that $L_i\cdot\ell=0$ for all $i$.
Notice that  $g^*K_X=K_X$ for all $g\in G$ since $\kappa>0$.
Since $L_i\cdot g_*\ell=\chi_i(g)L_i\cdot\ell=0$ and so $A\cdot g_*\ell=0$, and 
$$
(K_X+A)\cdot g_*\ell=g^*(K_X+A)\cdot\ell=(K_X+A)\cdot\ell<0.
$$ 
Thus, $g_*\ell$ is an extremal curve and  also  $A\cdot g_*\ell=0$.
Then after replacing $G$ by a finite-index subgroup, these finitely many extremal rays $\bR_{>0}[\ell]$ are fixed by $G$.

  Since our pair $(X,A)$ is $\bQ$-factorial klt and the boundary divisor $A$ is a relatively big divisor, we can apply \cite[Corollary 1.4.2]{BCHM10} to  run the $M$-scaled MMP for the pair $(X, A)$ as in \cite[\S~3.10]{BCHM10}.
  Notice that $K_X+A$ is not relatively nef, but $K_X+A+M$ is relatively nef by the choice of $M$.
There is an extremal ray $R=\bR_{\ge0}[\ell]$ such that 
$$
\lambda_0:=\inf\{\alpha\ge0~| ~K_X+A+\alpha M \text{ is relatively nef over }  S \},
$$
we have $K_X+A+\lambda_0 M$ is nef, $(K_X+A)\cdot\ell<0$ and $(K_X+A+\lambda_0 M)\cdot\ell=0$.
  As proved above, such $\ell$ satisfies $A\cdot \ell=0$ and hence $L_i\cdot\ell=0$ and $K_X\cdot\ell<0$, and we may assume that the extremal ray $R$ is $G$-stable.
Then there is a corresponding extremal contraction
$$
f:=\Contr_R: X \to Y
$$
onto a normal projective variety $Y$ such that the fibers of $f$ are connected, and a curve $C\subseteq X$ is contracted by $f$ to a point on $Y$ if and only if the class $[C]\in R$.
If $B$ is a Cartier divisor on $X$ such that $B\cdot\ell=0$, then $B=f^*B_Y$ for some Cartier divisor $B_Y$ on $Y$.
Since $R$ is $G$-stable, and $f$ is completely determined by $R$, the action of $G$ on $X$ descends to a biregular action of $G$ on $Y$ such that the morphism $f: X\to Y$ is $G$-equivariant.
Since $\kappa>0$ and $\kappa(F)=0$, we only consider the following two cases.

Case (I) (divisorial).
By Proposition \ref{Bir}, $(Y, G)$ satisfies $\mathrm{Hyp}(n,n-\kappa-1)$.
Since $L_j\cdot \ell=0$, then by the property of contraction $f$, we have $L_j=f^*L_{j,1}$ for some $\bR$-Cartier divisor $L_{j,1}$ on $Y$.
We have $L_{j,1}=f_*L_j$ by the projection formula.
Then $L_{j,1}$ is nef since $L_j$ is nef.
So $g^*L_j=\chi(g)L_j$ implies that $g^*L_{j,1}=\chi(g)L_{j,1}$.
Set $(X_1, A_1):=(Y, f_*(A))$.
We can continue the G-EMMP.

Case (II) (flip). 
$\dim Y=\dim X$ and the exceptional set $\Exc(f)$ of $f$ is a Zariski closed subset of $X$ of codimension $\ge2$ in $X$.
This $f: X\to Y$ is called a flipping contraction.
In this case, the natural map 
$$
X^+:=\Proj\bigoplus_{m\ge0}\mathcal O_Y(f_*\lfloor m(K_X+A)\rfloor) \to Y
$$
is a birational morphism such that $K_{X^+}+A^+$ is relatively ample over $Y$.
Here  $A^+$ is the proper transform of $A$.
Both birational maps $X\to Y$ and $X^+\to Y$ are isomorphisms in codimension 1.
In particular, we have $\NS_\bR(X/S)=\NS_\bR(X^+/S)$.
The map $X\dasharrow X^{+}$ is called a $(K_X+A)$-flip.
Since the flip $X^+$ is uniquely determined by the extremal ray $R$ stabilized by $G$, there is a biregular action of $G$ on $X^+$.
By Proposition \ref{Bir}, $(X^+, G)$ satisfies $\mathrm{Hyp}(n,n-\kappa-1)$.
As in Case (I), we also have $L_j=f^*f_*L_j$ and $f_*L_j$ is an $\bR$-Cartier divisor on $Y$.
Let $L_{j,1}$ be the pullback of $f_*L_j$ by the birational morphism $X^+\to Y$.
This is an $\bR$-Cartier nef divisor.
And we also have $g^*L_{j,1}=\chi(g)L_{j,1}$.
Set $(X_1,A_1):=(X^+, A^+)$.
We can continue the G-EMMP.

As proved above, we only consider the Cases (I) and (II).
Since a divisorial contraction decreases the Picard number, setting $(X, A)=(X_0, A_0)$, we may assume that we have a sequence
$$
	(X_0, A_0)\overset{\tau_0}{\dasharrow} (X_1, A_1) \overset{\tau_1}{\dasharrow}  \cdots \overset{\tau_{s-1}}\dasharrow (X_s, A_s)\overset{\tau_s}{\dasharrow} \cdots
$$
of flips $\tau_i$ corresponding to the extremal rays $\bR_{\ge0}[\ell_i]$ with $(K_{X_i} +A_i)\cdot\ell_i<0$ and $(K_{X_i}+A_i+\lambda_iM_i)\cdot \ell_i=0$.
Here $A_i$ and $M_i$ on $X_i$ are the direct images of  $A$ and $M$ respectively.
The real numbers $\lambda_i$ satisfy:
$$
\lambda_i=\inf\{ \alpha\ge0~|~K_{X_i}+A_i+\alpha M_i \text{ is relatively nef over }  S \}.
$$
Since the termination of flips holds, $K_{X_s}+A_s$ must be relatively nef for some $s>0$.

{\bf Step 2.  $K_Y+A_Y$ is a relatively ample divisor and $\tau_s$ is $G$-equivariant.}
Replacing $X$ by $X_s$, we may assume that $K_X+A$ is relatively nef and big.
By the base point free theorem (cf. \cite[Theorem 3.9.1]{BCHM10}),  there is a birational morphism $\gamma: X\to Y$ onto a normal projective variety $Y$ and a relatively ample $\bR$-Cartier divisor $B$ on $Y$ such that $K_X+A=\gamma^* B$.
By the projection formula, $B=K_Y+A_Y$ where $A_Y=\gamma_*A$.
So
$$
K_X+A=\gamma^*(K_Y+A_Y).
$$
Since $A=A_k+E/k$ with $A_k$ relatively ample, the extremal rays $\bR_{\ge0}[\ell]$ of $\gamma$ are perpendicular to the relatively nef and big divisor $K_X+A=\gamma^*(K_Y+A_Y)$.
Then for a sufficiently small positive number $\epsilon$, we have
\begin{equation*}
\begin{split}
0&=\gamma^*(K_Y+A_Y)\cdot\ell=(K_X+E/k+\epsilon A_k)\cdot \ell +(1-\epsilon)A_k\cdot\ell
\\&>(K_X+E/k+\epsilon A_k)\cdot \ell.		
\end{split}
\end{equation*}
So these are all $(K_X+E/k+\epsilon A_k)$-negative.
And there are only finitely many such $\bR_{>0}[\ell]$ since $E/k+\epsilon A_k$ is relatively big.
If $\gamma: X\to Y$ is an isomorphism, then the proof is clear.
Otherwise, there are such $\ell$.
Replacing $A$ by a large multiple, we may assume that the above finitely many extremal rays $\bR_{>0}[\ell]$ satisfy 
$$
0=(K_X+A)\cdot\ell=K_X\cdot\ell=A\cdot\ell.
$$
Since $A=\sum_{j=1}^{n-\kappa} L_j$ is a sum of nef divisors, we have $L_j\cdot\ell=0$.
Then the $G$-images $g_*\ell$ of $\ell$ are extremal rays and $K_X\cdot g_*\ell=L_j\cdot g_*\ell=0$.
Since there are only finitely many such extremal rays $\bR_{\ge0}[C]$, we may assume that all of them are $G$-stable, after replacing $G$ by a finite-index subgroup.
Then $\gamma$ is $G$-equivariant, and $(Y,G)$ satisfies $\mathrm{Hyp}(n,n-\kappa-1)$.

Now the contraction theorem (cf. \cite[Theorem 2.4.1]{Kawamata24}) and $L_i\cdot \ell=0$ for all the above extremal $\ell$ imply that $L_j=\gamma^*L(Y)_j$ for some $\bR$-Cartier divisor $L(Y)_j$ on $Y$ which is a nef common eigenvector of $G$, since so is $L_j$.
Further, $(L_Y)_j=\gamma_*L_j$ by the projection formula.
By the same contraction theorem, $K_X\cdot\ell=0$ implies that $K_Y=\gamma_*K_X$ is an $\bR$-Cartier divisor and $K_X=\gamma^*K_Y$.

{\bf Step 3. The very general fiber $F$  of the Iitaka fibration of $\varphi_{mK_Y}$ is a klt Calabi-Yau surface  or a $Q$-abelian variety.}
Theorem \ref{thm-n-kappa=2} implies \(\kappa(F)=0\). 
After shrinking $S$ to a nonempty smooth open subset $U$ over which \(\pi\) is flat, set \(Y_U:=\pi^{-1}(U)\). 
For a general fiber \(F=\pi^{-1}(s)\) with \(s\in U\), we have \(K_{Y_U}|_F\sim_{\mathbb Q}K_F\). Since \(K_Y\) is \(\mathbb Q\)-Cartier and \(Y_U\subset Y\) is open, this implies \(K_Y|_F\sim_{\mathbb Q}K_F\).
By the divisorial Zariski decomposition (cf. \cite[III, \S 1.12]{Nakayama04}), we have 
$$K_F=P+N,$$
where $P$ lies in the closed movable cone and $N$ is an effective divisor.
$$
K_F=g^*(K_F)=g^*P+g^*N
$$
and $K_F-g^*N=g^*P$ is movable, so $g^*N\ge N$ by the minimality of the negative part $N$ (cf. \cite[III, Proposition 1.14]{Nakayama04}).
Applying the above to $g^{-1}$ we get $N\ge g^*N$.
So we have
$$
g^*P\equiv P, g^*N=N.
$$
So every component of $N$ is $G$-periodic and hence stabilized by $G$.

Now 
$$
P\cdot M\cdot M_1\cdots M_{n-\kappa-2}=P|_M\cdot M_1|_M\cdots M_{n-\kappa-2}|_M\ge0
$$
for every irreducible divisor $M$ and nef $\bR$-Cartier divisor since $P|_M$ is a pseudoeffective divisor on $M$.
Notice that $K_F$ and $N$ are $G$-periodic subvarieties of $F$, and $(F,G)$ satisfies $\mathrm{Hyp}(n-\kappa,n-\kappa-1)$.
Then $A_Y|_F$ is a nef and big divisor as in \cite[Lemma 3.6]{Zhang16}.
Then by \cite[Lemma 3.7]{Zhang16}, $(K_F)\cdot (A_Y|_F)^{n-\kappa-1}=N\cdot (A_Y|_F)^{n-\kappa-1}=0$. 
So $$P\cdot (A|_F)^{n-\kappa-1}=(K_F-N)\cdot (A_Y|_F)^{n-\kappa-1}=0.$$
Thus we may apply Lemma \ref{Hodge-lem} to have $P\equiv0$.

Notice that $(K_Y+A_Y)|_F=K_F+A_Y|_F$ is an ample divisor since $K_Y+A_Y$ is relatively ample over $S$.
Then by \cite[Proposition 3.11(7)]{Zhang16} in which  set $F=X(0)$, $(A_Y|_F)|_Z\equiv0$ for every $G$-periodic proper subvariety $Z$ of $F$.
Further,
$$
(K_F+A_Y|_F)|_Z=K_F|_Z\equiv N|_Z
$$
is also nef, especially for $Z=N_i$, a component of $N$.
Thus,
$$
N\cdot M\cdot M_1\cdots M_{n-\kappa-2}=N|_M\cdot M_1|_M\cdots M_{n-\kappa-2}|_M\ge0
$$
for every irreducible divisor $M$ and nef $\bR$-Cartier divisors $M_i$ because $N|_M$ is a pseudo-effective divisor on $M$ (even when $M=N_i$); also $N\cdot (A_Y|_F)^{n-\kappa-1}=0$ by \cite[Lemma 3.7]{Zhang16}.
Lemma \ref{Hodge-lem} implies $N\equiv0$.
So the effective divisor $N=0$.
Thus $K_F \equiv N=0$.
Then the known abundance theorem in the case of zero log Kodaira dimension (cf. \cite[V. Corollary 4.9]{Nakayama04}) implies that $K_F\sim_\bQ 0$. 
We then obtain the ampleness of $A_Y|_F\equiv K_F+A_Y|_F$.
If $\dim F=2$, then $F$ is a klt Calabi-Yau surface.
If $\dim F\ge3$, then we can apply \cite[Lemma 3.9]{Zhang16} to $(F,G)$, every $G$-periodic proper subvariety of $F$ is a point.
Thus, $F$ is a $Q$-abelian variety by \cite[Lemma 3.10]{Zhang16}.
\end{proof}

\end{document}